                     \numberwithin{equation}{subsection}
                     \newtheorem{propo}{Proposition}[section]
                     \newtheorem{corol}[propo]{Corollary}
                  \newtheorem{theor}[propo]{Theorem}
                     \newtheorem{lemma}[propo]{Lemma}
                     \theoremstyle{definition}
                     \theoremstyle{remark}
                     \newcommand{\ZZ}{\mathbb{Z}}
                     \newcommand{\RR}{\mathbb{R}}
             \newcommand{\Ker}{\operatorname{Ker}}
              \newcommand{\card}{\operatorname{card}}
                     \newcommand{\id}{\operatorname{id}}
\begin{document}
      \title{Axiomatic phylogenetics}
                     \author[Vladimir Turaev]{Vladimir Turaev}
                     \address{%
              Department of Mathematics, \newline
\indent  Indiana University \newline
                     \indent Bloomington IN47405 \newline
                     \indent USA \newline
\indent e-mail: vturaev@yahoo.com} \subjclass[2010]{18A10, 54E99, 92D15}
                    \begin{abstract}
We use the language of quivers    to formulate a  mathematical framework for  phylogenetics.
\end{abstract}

\maketitle

\section{Introduction}\label{intro}

 Mathematical methods are commonly used in biology which, in some cases,   leads to new mathematical     theories, see   \cite{DHKMS}, \cite{ERSS}, \cite{Gr}, \cite{Re}, \cite{SS}, \cite{Ti}. In this paper we study  certain mathematical ideas suggested by   evolutionary biology.
  Biological evolution   is described in wikipedia as the \lq\lq change in the heritable characteristics of biological populations over successive generations".  Evolution creates a sequence of generations of species,  each generation arising from the previous one via   natural selection and/or genetic drift.
Every  species     produces  descendants   in the next generation.
  The    branching diagram   showing the  evolutionary relationships between   species is called the evolutionary  tree.    This tree has a distinguished vertex - a root - as all life on Earth is believed  to share  a common ancestor known as the last universal common ancestor (LUCA). Phylogenetics studies the evolutionary tree    and aims to recover   it from  the current generation of living organisms. For a review of phylogenetic analysis, see \cite{YR}; for mathematical aspects of phylogenetics, see \cite{SS}, \cite{DHKMS}.

  We  introduce here a mathematical formalism for evolution   emphasizing  its phylogenetic aspects. While it remains to be seen whether this   formalism may be of use in theoretical biology, it does suggest  new mathematical concepts.
 Our main idea   is to consider  not only the historical evolution but   all possible evolutions   of   primitive beings   into   complex organisms. To this end we use the language of quivers  (directed graphs). We introduce analogues of many key notions of phylogenetics in the setting of quivers. This includes  analogues of the notions of species, parents/children, LUCAs,  evolutions, generations, etc. We show that under appropriate assumptions on the quiver, it gives rise to an evolutionary tree. We give   examples of  quivers satisfying these assumptions. 
 
 %We introduce \emph{primitive vertices}\index{vertex!primitive} of a quiver which will play the role of LUCAs. We call a vertex~$X$ of a quiver {\it phylogenetic}\index{vertex!phylogenetic} if   all possible evolutions from   primitive vertices    to~$X$ have a common part which is viewed then as the canonical evolutionary history of~$X$.   We    define   {\it phylogenetic quivers}\index{quiver!phylogenetic}  by requiring all vertices to be phylogenetic and all edges to be non-degenerate in a certain sense. We show that phylogenetic quivers  naturally give rise to  evolutionary trees.

 %The   principal aim of the   study of   phylogenetic quivers is their construction  and  classification. 

%   The paper is organized as follows. In Sections~\ref{sect2}--\ref{Critical ancestors and normality}   we introduce  our basic notions:       evolutions,  primitive vertices,  etc.  In Sections~\ref{sect3newstylycrit}--\ref{Timing and reconstruction}  we   study     phylogenetic  vertices and  phylogenetic quivers.    In Sections~\ref{sect35df} and~\ref{sect35dfmetrc}   we  discuss examples drawn from set theory, group theory, and  the theory of    metric spaces.

 This work was partially supported by the NSF grant  DMS-1664358.

  \section{Quivers  and evolutions}\label{sect2}

 % We introduce the language of quivers  and evolutions.

  \subsection{Quivers}\label{sect21}  A  \emph{quiver}\index{quiver} $\mathcal O$ is formed by  a class   of vertices\index{vertex} 
    and a collection of sets $\{\mathcal {O}(A,B)\}_{A,B }$ where $A,B$ run over the vertices  of~$\mathcal O$.  The   elements of the set  $\mathcal {O}(A,B)$ are called     \emph{edges} from~$A$ to~$B$ and   are  represented by   arrows $A \to B$ or $B \leftarrow A$. (We allow  $A=B$, i.e, the quiver~$\mathcal O$ may have loops). To indicate that~$A$ is a vertex  of~$\mathcal O$ we   write $A \in \mathcal O$. A biologically minded reader may replace the word   vertex with \lq\lq species". An arrow $B \leftarrow A$ is understood in the sense that~$B$ is a \lq\lq parent" of~$A$.

  \subsection{Evolutions}\label{sect21+} An \emph{evolutionary chain} or, shorter, an \emph{evolution}\index{evolution}  of length $m\geq 0$    in a  quiver~$ \mathcal O $ is a   sequence
  \begin{equation}\label{evolfifif} A_0  {\longleftarrow} A_1   {\longleftarrow} A_2  {\longleftarrow} \cdots  {\longleftarrow} A_m \end{equation}  where   $A_0, A_1,..., A_m \in \mathcal O$  and   the arrows are    edges   of~$\mathcal O$. We call $A_0$  the      \emph{initial}  vertex     and  $A_m$   the \emph{terminal} vertex  of the evolution. 
  
 For  $A,B\in \mathcal O  $, we write $A \leq B$ and say that~$A$ is an  \emph{ancestor}\index{ancestor}  of~$B$  and~$B$ is a  \emph{descendant}\index{descendant}  of~$A$ if there is an evolution in~$ \mathcal O $  leading from~$A$ to~$B$, i.e.,  starting at~$A$ and terminating at~$B$.
 
 % e agree that  every vertex of~$\mathcal O$ is both its own ancestor and  its own descendant.

%so that $A$ is both an ancestor and a descendant of itself.  This is so because the 0-length evolution $(A)$ has~$A$ as both  the initial and  the terminal vertex.

   \begin{lemma}\label{lepreorder} The relation   $\leq$   is  reflexive  and transitive.
\end{lemma}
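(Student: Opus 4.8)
The plan is to verify the two defining properties of a preorder directly from the definition of an evolution, exploiting the fact that the length $m$ of an evolution is permitted to be any integer $\geq 0$.

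For reflexivity, given a vertex $A \in \mathcal O$, I would exhibit the evolution of length $m=0$ consisting of the single vertex $A_0 = A$. By the conventions of Section~\ref{sect21+}, its initial and terminal vertices coincide and both equal $A$, so this evolution leads from $A$ to $A$ and witnesses $A \leq A$. The only point to check is that length-zero evolutions are admitted, which they are, since the definition allows $m \geq 0$.

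For transitivity, suppose $A \leq B$ and $B \leq C$. Unwinding the definition, there is an evolution $A = A_0 \leftarrow A_1 \leftarrow \cdots \leftarrow A_m = B$ with initial vertex $A$ and terminal vertex $B$, and an evolution $B = B_0 \leftarrow B_1 \leftarrow \cdots \leftarrow B_n = C$ with initial vertex $B$ and terminal vertex $C$. Since the terminal vertex of the first chain equals the initial vertex of the second (both are $B$), I would concatenate them along $B$ to form the single sequence $A = A_0 \leftarrow \cdots \leftarrow A_m = B_0 \leftarrow \cdots \leftarrow B_n = C$. Each arrow in this sequence is an edge of $\mathcal O$, being an arrow of one of the two original evolutions, so the concatenation is again an evolution, now of length $m+n$, with initial vertex $A$ and terminal vertex $C$. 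Hence $A \leq C$.

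I expect no substantive obstacle here: the statement is a formal consequence of the definitions. The only mild care required is the bookkeeping of confirming that gluing two chains at the shared vertex $B$ yields a well-formed sequence all of whose arrows remain edges of $\mathcal O$ and whose initial and terminal vertices are exactly $A$ and $C$.
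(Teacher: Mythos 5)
Your proof is correct and follows exactly the paper's argument: reflexivity via the length-zero evolution $(A)$ and transitivity via concatenation of the two chains at the shared vertex $B$. The extra bookkeeping you spell out is fine but not needed beyond what the paper already records.
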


\begin{proof} The reflexivity of  $\leq$ means that $A\leq A$ for any $A \in \mathcal O $.  In other words, every $A\in \mathcal O$ is both its own ancestor and  its own descendant. This is obtained  using  the evolution of~$A$ to itself  of length~0.
The transitivity of  $\leq$ means that if $A \leq B$ and $B \leq C$, then $A \leq C$ for any $A,B,C \in \mathcal O $.  This is obtained by concatenating an evolution   from~$A$ to~$B$ with an evolution  from~$B$ to~$C$. \end{proof}

\subsection{Isotypy}\label{sect22relation} We say that   vertices  $A,B  $ of\index{vertex!isotypic}  a quiver  $\mathcal O  $ are \emph{isotypic}\index{isotypy}  and write $A \sim   B$ if  both $A\leq B$ and $B\leq A$,  i.e., if $A, B$ are descendants of each other.  The vertices $A,B$ are isotypic iff there is a circle formed by arrows and traversing both~$A$ and~$B$:
$$A {\longleftarrow}  \cdots {\longleftarrow} B  {\longleftarrow} \cdots  {\longleftarrow} A.
$$ The   relation~$\sim  $ is an equivalence relation: every vertex  is isotypic to itself (the reflexivity); if $A \sim   B$, then $B \sim   A$ for any   $A,B \in\mathcal O$ (the symmetry); if $   A\sim B \sim C$, then $ A \sim C $ for any   $A,B,C\in\mathcal O$
(the transitivity). The first two properties   follow   from the definitions. The transitivity holds because if  $A\sim B \sim C$, then $A\leq B\leq C$ and $C\leq B\leq A$. Therefore $A\leq C$ and $C\leq A$, i.e., $A\sim C$.

The next lemma shows that isotypic vertices are  equivalent from the evolutionary viewpoint and may be considered as inessential variations of each other. 

   \begin{lemma}\label{lerelation}   For any   $A,B \in \mathcal O  $ the following five conditions are equivalent:

   (i) $A$ and~$B$ are isotypic;

   (ii) $A$ is both an ancestor and a descendant of~$B$;

   (iii) $B$ is both an ancestor and a descendant of~$A$;

   (iv) $A$ and~$B$ have the same ancestors;

   (v) $A$ and~$B$ have the same descendants.
\end{lemma}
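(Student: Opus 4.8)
The plan is to reduce all five conditions to the single comparability statement "$A \leq B$ and $B \leq A$," which is exactly the definition of isotypy in~(i). First I would observe that conditions~(i), (ii), and~(iii) are literally the same once the definitions are unwound: saying $A$ is an \emph{ancestor} of $B$ means $A \leq B$, while saying $A$ is a \emph{descendant} of $B$ means $B \leq A$. Hence~(ii) asserts $A \leq B$ together with $B \leq A$, and symmetrically~(iii) asserts $B \leq A$ together with $A \leq B$; both coincide verbatim with~(i). So these three equivalences require no work beyond tracking which vertex plays the role of ancestor and which of descendant.

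The substantive content lies in linking the relational conditions~(i)--(iii) to the set-equality conditions~(iv) and~(v). I would prove (i)$\Rightarrow$(iv) using transitivity (Lemma~\ref{lepreorder}): if $A \sim B$, then $A \leq B$ and $B \leq A$, so any ancestor $X \leq A$ satisfies $X \leq B$ by transitivity, and conversely any ancestor of $B$ is an ancestor of $A$; the two ancestor sets therefore agree. For (iv)$\Rightarrow$(i) the decisive tool is reflexivity: since $A \leq A$, the vertex $A$ lies among its own ancestors, hence by~(iv) it is also an ancestor of $B$, giving $A \leq B$; the symmetric argument starting from $B \leq B$ yields $B \leq A$, so $A \sim B$.

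The equivalence (i)$\Leftrightarrow$(v) then follows by the identical pattern with descendants replacing ancestors, again invoking transitivity in the forward direction and reflexivity in the reverse. Assembling the implications---(i)$\Leftrightarrow$(ii)$\Leftrightarrow$(iii) by definition and (iv)$\Leftrightarrow$(i)$\Leftrightarrow$(v) by the two preorder axioms---closes the loop and establishes the full five-way equivalence.

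I expect no genuine obstacle here; the lemma is essentially a bookkeeping exercise built entirely on Lemma~\ref{lepreorder}. The only points demanding care are the correct translation of \emph{ancestor} and \emph{descendant} into the two directions of $\leq$, and the recognition that reflexivity is precisely what converts the hypothesis of equal ancestor (respectively descendant) sets back into the two comparability relations that constitute isotypy.
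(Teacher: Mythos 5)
Your proposal is correct and follows the paper's own argument essentially verbatim: (i)--(iii) coincide by unwinding the definitions, (i)$\Rightarrow$(iv) uses transitivity, (iv)$\Rightarrow$(i) uses reflexivity (each vertex being its own ancestor), and (v) is handled symmetrically. No differences worth noting.
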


\begin{proof}   The equivalences   $ (i) \Longleftrightarrow  (ii)   \Longleftrightarrow  (iii) $ follow directly from the definitions.
If $A \sim B$ and~$X $ is an ancestor of~$A$, then $X\leq A\leq B$. So, $X\leq   B$, i.e., $X$ is an ancestor of~$B$. Similarly,  all ancestors of~$B$ are ancestors of~$A$.
Thus, $(i) \Longrightarrow (iv)$. Conversely, if $(iv)$ holds, then $A$ being its own ancestor must be an ancestor of~$B$, i.e., $A\leq B$. Analogously, $B\leq A$. Thus, $(iv) \Longrightarrow (i)$ and so  $(i) \Longleftrightarrow  (iv) $. The equivalence $ (i) \Longleftrightarrow  (v) $ is checked similarly.  \end{proof}

Note that all vertices    appearing in an evolution between  isotypic vertices are isotypic:
For  any evolution \eqref{evolfifif} between isotypic vertices  $A_0, A_m$, we have $  A_0 \sim A_k$ for $k=0,1,..., m$. 
Indeed, the existence of the  evolution \eqref{evolfifif} implies that $A_0\leq A_k \leq A_m$. Since $A_0, A_m$ are isotypic,    $  A_m \leq A_0$. Therefore   $A_k \leq A_0$. Thus, $A_k \sim A_0$. 

\subsection{Hereditary and anti-hereditary properties} A property, say, $P$ of vertices  of a quiver   is   \emph{hereditary}\index{property!hereditary}   if for any  vertex    having~$P$, all its descendants  also have~$P$.
Similarly, the property~$P$  is     \emph{anti-hereditary}\index{property!anti-hereditary}   if for any  vertex    having~$P$, all its   ancestors also  have~$P$. If the property~$P$ is hereditary  or  anti-hereditary, then it is isotypy invariant, i.e.,  all vertices  isotypic to a vertex  having~$P$  also have~$P$. If a property  is hereditary, then its negation is anti-hereditary and vice versa. 

%Examples of hereditary/anti-hereditary properties will be given below.

 \subsection{Examples}\label{sect2example}  1.  Let $\mathcal{SET}$ be the  quiver    formed by      finite non-empty sets as vertices and  maps between  sets as edges.
  An evolutionary chain of length $m\geq 0$ in $\mathcal{SET}$ is   a sequence of  finite non-empty sets $A_0 , A_1,..., A_m$ and maps $\{  A_{k } \to A_{k-1}\}_{k=1}^m$. The elements of    $A_k$  can be viewed as the   species  of the $k$-th generation  while the map  $  A_{k } \to A_{k-1}$ carries  each   species to its parent.  Such evolutions  reflect     asexual reproduction:  each species has a single parent.  (Reproduction involving two    parents  may be formalized     by taking as edges between sets $A,B$  the maps  $   B  \to A \times A$.)
  For any  $A, B \in \mathcal{SET}$,  a   map    carrying~$B$ to a single element of~$A$ yields a length~1 evolution $A\leftarrow B$. Thus, $A\leq B$. Consequently, all vertices  of  $\mathcal{SET}$ are  isotypic.

2.   Let~$\mathcal{S}$ be the  quiver      formed by      finite non-empty sets as vertices and  surjective maps between sets as edges.
 It is clear  that   $A \leq B$ for $A,B\in   \mathcal S  $ if and only if $\card(A)\leq \card(B)$ where $\card$ is the number of elements of a set. Therefore, $A \sim   B$ if and only if $\card(A)=\card(B) $. % The %surjectivity of   maps in~$\mathcal{S}$ imposes a   heavy restriction on       evolutions   in~$\mathcal{S}$:  each   individual (except those in the terminal generation of the evolution)  must have at least one descendant in the next generation. %Then each individual   has   at least one descendant in all consecutive generations of the evolution.

 %Of course, such restrictions on evolutions make sense only in abstract mathematics.
%For any evolutionary chain
% \begin{equation*}  A_0  \stackrel{f_1} {\longleftarrow} A_1   \stackrel{f_2} {\longleftarrow}   \cdots  \stackrel{f_m} {\longleftarrow} A_m  \end{equation*}
%  in   $\mathcal{SET}$,

 % Such an evolution may be presented by a   graph      whose     set of vertices is the disjoint union $\amalg_{k=0}^m \, A_k$ and whose edges connect each    $x\in A_k$ with   $ k \geq 1$   to  $  f_k(x)\in A_{k-1}$. This graph is a forest whose  connected components are enumerated by  elements of  $A_0$. Indeed, each such component has a unique vertex  belonging to~$  A_0$.

 \section{Primitivity and the height}\label{sect3newstyly}

%We  introduce primitive   vertices,  full evolutions, and the height of vertices.

\subsection{Primitive vertices }\label{sect22}   The role of   LUCAs  in evolutionary biology will be played  here by so-called primitive vertices. We call a vertex~$A$   of a quiver~$ \mathcal O   $         \emph{primitive}\index{vertex!primitive}  if all   ancestors  of~$A$ also are descendants of~$A$. Thus, $A \in   \mathcal O$ is primitive if $B\leq A \Longrightarrow A \leq B$ for all $B\in   \mathcal O  $. It is clear from our definitions that a vertex~$A$ is primitive iff all its ancestors are isotypic to~$A$.  

A quiver  may have no primitive vertices.  If it has primitive vertices,  then   they may be non-isotypic to each other. 
 At the same time,  all vertices  isotypic to a primitive vertex    are primitive, as is clear   from  the following lemma.

  \begin{lemma}\label{le1} All ancestors of a primitive vertex  are primitive.
\end{lemma}

\begin{proof}  Let~$A  $ be   a primitive  vertex  of a quiver~$\mathcal O$ and let $B\in \mathcal O$ be an ancestor of~$A$. If~$C $ is an ancestor of~$ B$, then $C\leq B \leq A$ and by transitivity, $C\leq  A$. Since~$A$ is primitive, we must have  $A \leq C$. Since $B \leq A$, the transitivity yields $B \leq C$. Thus,
$C\leq B \Longrightarrow B \leq C$, i.e., $B$ is primitive. \end{proof}
 
%% Lemma~\ref{le1}   implies, in particular, that all vertices isotipic to a primitive vertex are themselves primitive. 

  %Note also that   all descendants of a non-primitive vertices  are non-primitive.

\subsection{The height}\label{sect22hhfull} 
 By a \emph{full evolution}\index{evolution!full}    for a vertex $X\in\mathcal O$ we mean an evolutionary chain in~$\mathcal O$ which starts at a primitive vertex  and terminates at~$X$. We view such an evolution as an evolutionary history of~$X$. A full evolution for~$X$ does not necessarily exist, and if it exists, it is not necessarily unique.

 We define the \emph{height}\index{height} $ h(X) $ of  $X\in \mathcal O$   to be the smallest   integer $h\geq 0 $ such that there is a full evolution for~$X$ of length~$h$. The height measures the  evolutionary complexity of~$X$, i.e., the number of  steps needed  to evolve from a primitive vertex to~$X$.  If there are no full  evolutions for~$X$, then   we set $h(X)= \infty$.

 It is clear that a vertex $X\in \mathcal O$  has a finite height if and only if~$X$ has a primitive ancestor. Concatenating   evolutionary chains we  conclude that the property of a vertex  to have finite height is hereditary.
  Note also that $h(X)=0$ if and only if~$X$ is   primitive, and   $h(X)=1$ if and only if~$X$ is not primitive but there is an  edge from~$X$ to a primitive vertex.

A full evolution  for  a vertex $X\in \mathcal O $   is said to be     \emph{short}\index{evolution!short}   if its length is  the smallest among all full evolutions for~$X$. Such an evolution    exists     if and only if $h(X)<\infty$ and its length is equal to $h(X)$.  

 \begin{lemma}\label{le2bbb}   Let $ \alpha=(  A_0 \leftarrow    \cdots \leftarrow A_m) $ be a full evolution
   for a vertex  $  A_m\in \mathcal O $. Then     $h(A_k) \leq k$ for  all $k=0,1,..., m$. If $\alpha$ is short, then $h(A_k) = k$ for   all~$k$.
\end{lemma}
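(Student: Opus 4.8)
The plan is to handle the two assertions separately, deriving the inequality by a truncation argument and the equality by a minimality argument based on concatenation of evolutions.

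First I would prove that $h(A_k)\le k$ for all $k$. Since $\alpha$ is a full evolution, its initial vertex $A_0$ is primitive. For each $k\in\{0,1,\dots,m\}$ consider the truncation $\alpha_k=(A_0\leftarrow\cdots\leftarrow A_k)$ obtained by discarding the vertices $A_{k+1},\dots,A_m$ and the edges between them. This $\alpha_k$ is an evolutionary chain of length $k$ that starts at the primitive vertex $A_0$ and terminates at $A_k$; hence it is a full evolution for $A_k$. By the definition of height as the smallest length of a full evolution, the existence of $\alpha_k$ forces $h(A_k)\le k$. This disposes of the first claim with no real difficulty.

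For the second claim, assume $\alpha$ is short, so that $h(A_m)=m$, and suppose for contradiction that $h(A_k)<k$ for some index $k$. Then there is a full evolution $\beta=(B_0\leftarrow\cdots\leftarrow B_\ell)$ for $A_k$ with $\ell=h(A_k)<k$, where $B_0$ is primitive and $B_\ell=A_k$. I would then splice $\beta$ together with the tail $\gamma=(A_k\leftarrow A_{k+1}\leftarrow\cdots\leftarrow A_m)$ of $\alpha$, which has length $m-k$; since $\beta$ ends where $\gamma$ begins, the concatenation is again an evolutionary chain, it starts at the primitive vertex $B_0$ and terminates at $A_m$, and its length is $\ell+(m-k)$. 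Because $\ell<k$, this length is strictly less than $m$, giving a full evolution for $A_m$ shorter than $\alpha$. This contradicts the assumption that $\alpha$ is short (equivalently, that $h(A_m)=m$). Combined with $h(A_k)\le k$ from the first part, we conclude $h(A_k)=k$.

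The argument is essentially routine, and the only point requiring a little care is the concatenation step: one must check that joining $\beta$ and $\gamma$ at the common vertex $A_k=B_\ell$ really yields a single evolutionary chain of the stated length, that its initial vertex $B_0$ is primitive, and hence that the result is a \emph{full} evolution for $A_m$. This is exactly the concatenation mechanism already used to prove transitivity of $\leq$ in Lemma~\ref{lepreorder} and to show that finite height is hereditary in Lemma~\ref{fifn}, so it presents no genuine obstacle.
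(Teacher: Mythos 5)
Your proposal is correct and follows essentially the same route as the paper: the inequality comes from observing that the initial segment $\alpha_k$ is itself a full evolution for $A_k$, and the equality comes from concatenating a hypothetical shorter full evolution for $A_k$ with the terminal segment $\alpha^k$ to contradict the shortness of $\alpha$. The only difference is presentational (you phrase the second part as a proof by contradiction, while the paper states it directly), which is immaterial.
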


\begin{proof}    For   $k=0,1,..., m  $, let
       $\alpha_k=(A_0  \leftarrow    \cdots  \leftarrow  A_k)$ be the initial  segment of~$\alpha$ of length~$k$
       and let $\alpha^{ k}=(A_k  \leftarrow    \cdots  \leftarrow  A_m)$ be the terminal  segment of~$\alpha$ of length $m-k$. Since $\alpha$ is a   full evolution,   $A_0$ is a primitive vertex  and so $\alpha_k$ is a full evolution for $A_k$. Thus, $h(A_k) \leq k$.
  If there is a full evolution  for $A_k$ of length $<k$  then  concatenating it with $  \alpha^{ k} $ we obtain  a full evolution
  for $A_m $ of length   $<k+(m-k)=m$. Therefore, if $\alpha$ is   short, then so is $\alpha_k$ and $h(A_k)=k$.
   \end{proof}

  Lemma~\ref{le2bbb} implies that   any vertex    of finite  height $m\geq 1$ has    ancestors of heights    $ 0,1,..., m-1$.
  
   \subsection{Examples}\label{sect2exampleexamplea---} 1.  All vertices  of the quiver  $\mathcal{SET}$ from Example~\ref{sect2example}.1  are primitive and have  zero height.   Consider next the  quiver~$\mathcal{S}$   from Example~\ref{sect2example}.2. A finite non-empty set $X\in \mathcal S$  is primitive   if and only if $\card(X)=1$.   Every   $X\in   \mathcal S  $ is the terminal vertex   of a full evolution:   if $\card (X)=1 $, then this  is the length~0 evolution   $(X)$;   if $\card (X) \geq 2$, then this is  the length~1 evolution $A \leftarrow X$ where~$A$ is a  1-point set  and the arrow stands for the only map  from $X$ to~$A$. Thus, $h(X)=0$ if $\card(X)=1$ and $h(X)=1$ otherwise.  

 2.    Let $\Gamma$ be a rooted tree, i.e., a connected graph without cycles and with a distinguished vertex~$\ast$ (the root).   We direct all edges of~$\Gamma$ as follows:  if an edge of~$\Gamma$ connects   vertices $A,B$ and there is a path from~$\ast$   to~$A$ missing~$B$, then this edge is directed from~$B$ to~$A$. This turns~$\Gamma$ into a quiver.  It is easy to check that:  different vertices    of~$\Gamma$   cannot be isotypic; the vertex~$\ast$ is the only primitive vertex   of~$\Gamma $;   every  vertex  $X\in   \Gamma  $ is the terminal vertex   of a unique full evolution  formed by the vertices and edges of the shortest path from $\ast$ to~$X$,  this   full evolution  is short.
 
 %All vertices    of~$\mathcal {O} $  are normal. %

  3. Consider the quiver with  three vertices $A,B, C$ and three edges  leading   from~$B$ to~$A$, from~$B$ to~$C$, and from~$C$ to~$B$.
  Then $A \leq B \sim C$   and   $h(A)=0$, $h(B)=1$, $h(C)=2$.  Thus,   isotypic vertices    may have different heights.
  
  4. The quiver with vertices  $\{A_k\}_{k\in \ZZ}$ and   arrows $\{A_k \leftarrow A_{k+1}\}_{k\in \ZZ}$ has no primitive vertices    and $h(A_k)=\infty$ for all~$k$.
  
  %The vertex  $A$ is the only critical ancestor of~$B$ and  $A, B$ are
 % the critical ancestors of~$C$.  All the three vertices  $A,B,C$ are normal. 

 \section{Universal evolutions and phylogenetic vertices}\label{sect3newstylycrit}

 We introduce   so-called universal evolutions and phylogenetic vertices.

  \subsection{Universal evolutions}  Given  two evolutionary chains  $$ \alpha=(A_0 \leftarrow   \cdots \leftarrow A_m)\quad {\text {and}} \quad \beta=( B_0 \leftarrow      \cdots \leftarrow   B_n)  $$ in a quiver~$\mathcal O$,   we say that~$ \alpha $    \emph{embeds} in~$\beta  $
    if $m\leq n$ and
    there are integers $$0\leq r_0< r_1< \cdots < r_m\leq n$$ such that $A_k\sim B_{r_k}$ for   $k=0,1,..., m$. For $m=n$, this condition amounts to   $A_k\sim B_k$ for all~$k$. In this case we say  that  the evolutions  $\alpha, \beta$  are \emph{isotypic}.

             A \emph{universal evolution}\index{evolution!universal}   for a vertex  $X \in \mathcal O$    is a full evolution for~$X$   which  embeds in all   full evolutions for~$X$. Thus, the vertices    of a universal evolution for~$X$ must appear (in the same order) in any full evolution for~$X$, at least up to isotypy.  A universal evolution for~$X$ is necessarily short. So, its length is equal to $h(X)$.  A universal evolution for~$X$ may exist only if $h(X) <\infty$. 

   Clearly,  any two universal evolutions   for a vertex  $X \in \mathcal O$     are isotypic.                 Any evolution isotypic to a universal evolution is itself universal. 
        We view  a universal evolution for~$X$   as  a complete evolutionary history of~$X$.

  \subsection{Phylogenetic  vertices }\label{sect22hh} A vertex $X\in \mathcal O$    is \emph{phylogenetic} if there is a universal evolution  for~$X$  in~$\mathcal O$.
   For example, for  a  primitive~$X $,  the length zero evolution $(X)$ is universal. Thus, all primitive vertices  are phylogenetic.

   We     state several    properties of phylogenetic vertices.

  \begin{theor}\label{le2}   Let $  X\in \mathcal O$ be a phylogenetic vertex. Then:
  
  (i)  $X$ has   a  primitive ancestor;
  
    (ii) all primitive ancestors  of~$X$ are   isotypic to each other;
    
     (iii)  all short  full evolutions    for~$X$   are universal;
     
     (iv) all  vertices  appearing in   a universal evolution for~$X$ are phylogenetic. 

\end{theor}

\begin{proof} Pick a universal evolution   $   \alpha=(A_0 \leftarrow   \cdots \leftarrow A_m ) $  for $X=A_m$. Clearly, $A_0$ is a primitive ancestor of~$X$ which yields (i). To prove (ii), consider another  primitive ancestor~$B$ of~$X$ and  an evolution
 $ \beta=(B_0 \leftarrow   \cdots \leftarrow B_n)$  from $B_0=B$ to $B_n=X$. Since $B$ is primitive, $\beta$ is  full. Since~$  \alpha$  is universal, it  embeds in~$ \beta$. Consequently, $A_0 \sim B_k$ for some $k  \leq n$. Then  $B=B_0\leq B_k\leq A_0$. Hence $B\leq A_0$, and the primitivity of $A_0$ ensures that $B  \sim A_0$. This gives (ii).
 
  To prove (iii), consider a  short full evolution $ \beta=(B_0 \leftarrow \cdots \leftarrow B_m)$   for $X=B_m$. Since~$\alpha$   embeds  in~$\beta$ and $\alpha, \beta$ have the same length,
  $A_k\sim B_{k}$ for all $k\leq m$. Therefore~$ \beta$ is universal.
 
For $k=0,1,..., m$,
let $\alpha_k$ and $\alpha^k$ be the  initial and terminal segments of the universal evolution~$\alpha$ as defined in the proof of Lemma~\ref{le2bbb}.
    Since $\alpha$ is a full evolution for~$A$,  $\alpha_k$ is a full evolution for $A_k$.
 Any full evolution~$\gamma$ for $A_k$   concatenated with $\alpha^k$  yields   a full evolution
$  \gamma \, \alpha^k$ for $A_m $. Since the evolution   $ \alpha_{k} \, \alpha^{k }=\alpha$ is universal, it embeds in
$\gamma \, \alpha^k$. Therefore
$  \alpha_{k } $ embeds in~$\gamma $. Thus,   $\alpha_{k }$ is a universal evolution for $A_k$ and    the vertex $A_k$ is phylogenetic.
  \end{proof}

 \subsection{Examples}\label{secteklflf} The full evolutions     in Examples~\ref{sect2exampleexamplea---}.1 and~~\ref{sect2exampleexamplea---}.2  are universal. All vertices  of the quivers  in these examples are phylogenetic.

 %  \subsection{Three classes of  quivers }\label{sectmonot} We introduce  three classes of quivers needed in the sequel. Small quivers: 

  % Next note that the height of a vertex of a quiver may decrease  under certain evolutions, and we view such   evolutions  as degenerate.   We call a   quiver~$\mathcal O$    \emph{monotonous}\index{quiver!monotonous} if it does not have such degenerate evolutions, i.e., if  for any edge  $A\to B$ in~$\mathcal O$, we have $h(A) \geq h(B)$. This condition may be reformulated by saying that all  descendants  of any  vertex  $B\in \mathcal O$ have the height $\geq  h(B)$. An important consequence:  in a monotonous quiver, isotypic vertices     have the same height. 

%Finally, a  quiver    is   \emph{phylogenetic}\index{quiver!phylogenetic} if   it is small, monotonous,  and all its vertices  are phylogenetic.  

%For   monotonous quivers  we can invert  Theorem~\ref{BEST+} as follows.

  \section{Critical ancestors and normality}\label{Critical ancestors and normality}
  
We   further develop our language  and, in particular, introduce the notion of a critical ancestor. In this section,  $\mathcal O$ is an arbitrary quiver.
  
   \subsection{The  step inequality}\label{sectmonotonestep} The height of a vertex  of~$\mathcal O$  cannot increase too quickly under evolutions. In fact, for any length~1 evolution (i.e., an edge  of  our quiver) $ A \leftarrow B $ we   have the \emph{step  inequality}   \begin{equation}\label{onestep} h(B) \leq h(A)+1.\end{equation}    More precisely,  if $h(A)<\infty$, then \eqref{onestep} holds and, in particular, $h(B)<\infty$. Indeed,  concatenating a   full evolution for~$A$ of length $h(A)$ with the evolution $ A \leftarrow B $ we  obtain a full evolution for $B$ of length $h(A)+1$. If $h(A)=\infty$, then the inequality \eqref{onestep} provides no information on $h(B)$.

\subsection{Critical ancestors}\label{sectcritttmoncriteee}   A vertex  $A\in \mathcal O$   is a  \emph{critical ancestor}\index{ancestor!critical}  of a vertex $B\in \mathcal O$ %we mean  the critical vertices  of      full evolutions  for~$B$.  of~$B$ if and only
    if $h(A) <\infty$ and there is an evolution $A \leftarrow A_1 \leftarrow \cdots \leftarrow B$ of length $\geq 1$  such that $h(A_1)=h(A)+1$. We view  critical ancestors of~$B$   as   gatekeepers  yielding  access to higher  levels in evolutions   from the primitives to~$B$. It follows from   Lemma~\ref{le2bbb}   that any vertex  of a finite height $m\geq 1$ has critical ancestors of heights $0,1,..., m-1$.  
  Primitive vertices  have no critical ancestors because  all ancestors of a primitive vertex    are primitive and have zero height. So, the equality $h(A_1)=h(A)+1$ above cannot hold.

A critical ancestor of a vertex  $B \in \mathcal O$ is necessarily a critical ancestor of all descendants of~$B$. Consequently, isotypic vertices  have the same critical ancestors. 
 
 % An vertex  isotypic to a critical ancestor of~$B$ does not have to be a critical ancestor of~$B$.

\subsection{Normal vertices }\label{Normal vertices }  
 We call a  vertex  $B \in \mathcal O$    \emph{normal}\index{vertex!normal} if   any  two critical ancestors of~$B$ of the same height are isotypic.  
  For instance,  all  primitive vertices  are normal simply because they have no critical ancestors.

\begin{lemma}\label{corodddSDl1fgfllla}\label{BEST}  The normality of a vertex  is anti-hereditary. Vertices  isotypic to a normal vertex     are normal.
\end{lemma}

\begin{proof}   If~$B$ is a descendant of~$A$, then all critical ancestors of~$A$ are   critical ancestors of~$B$.  Consequently, if $B$ is normal, then so is~$A$. The second claim of the lemma  follows from the first claim. \end{proof}

 \begin{theor}\label{corodddSDl1fgfllla+}\label{BEST+}    Any  normal vertex   of finite height is phylogenetic.
\end{theor}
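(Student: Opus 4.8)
The plan is to fix a short full evolution for $X$ and prove it is universal. Set $m=h(X)$, which is finite by hypothesis, and choose a short full evolution
$$\alpha=(A_0\leftarrow A_1\leftarrow\cdots\leftarrow A_m=X).$$
By Lemma~\ref{le2bbb}, $h(A_k)=k$ for all $k$, so for each $k\leq m-1$ the equality $h(A_{k+1})=h(A_k)+1$ holds and $A_k$ is a critical vertex of $\alpha$, hence a critical ancestor of $X$ of height $k$. To show that $\alpha$ is universal I would start from an arbitrary full evolution $\beta=(B_0\leftarrow\cdots\leftarrow B_n)$ for $X$ and construct integers $0\leq r_0<r_1<\cdots<r_m\leq n$ with $A_k\sim B_{r_k}$ for all $k$.

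The heart of the argument is to pick out of $\beta$ a critical ancestor of $X$ of each prescribed height. Here I record that every $B_j$ has finite height, being a descendant of the primitive $B_0$ (indeed $h(B_j)\leq j$ by Lemma~\ref{le2bbb}), that $h(B_0)=0$ and $h(B_n)=m$, and that \eqref{onestep} gives $h(B_{j+1})\leq h(B_j)+1$. For each $i\in\{0,\dots,m-1\}$ I would set $r_i=\max\{j:h(B_j)\leq i\}$, which exists because $h(B_0)=0$ and satisfies $r_i<n$ because $h(B_n)=m>i$. Maximality forces $h(B_{r_i+1})\geq i+1$, while the step inequality forces $h(B_{r_i+1})\leq h(B_{r_i})+1\leq i+1$; comparing the two yields $h(B_{r_i})=i$ and $h(B_{r_i+1})=i+1$, so $B_{r_i}$ is a critical vertex of $\beta$ and thus a critical ancestor of $X$ of height $i$. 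The nested sets $\{j:h(B_j)\leq i\}$ increase with $i$, giving $r_i\leq r_{i'}$ for $i<i'$, and the values $h(B_{r_i})=i$ are pairwise distinct, giving $r_i\neq r_{i'}$; hence $r_0<r_1<\cdots<r_{m-1}<n$, and I set $r_m=n$.

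Finally I would invoke normality. For $k\leq m-1$ the vertices $A_k$ and $B_{r_k}$ are both critical ancestors of $X$ of the common height $k$, so normality of $X$ gives $A_k\sim B_{r_k}$; for $k=m$ we have $A_m=X=B_n=B_{r_m}$, so $A_m\sim B_{r_m}$ holds trivially. Thus $\alpha$ embeds in $\beta$, and as $\beta$ was arbitrary, $\alpha$ is universal and $X$ is phylogenetic. The primitive case $m=0$ is swept up by the same argument (the range of heights is empty and only $r_0=r_m=n$ remains, with $A_0=X=B_n$), recovering the observation that primitive vertices are phylogenetic. I expect the main obstacle to be the middle step --- extracting, from an arbitrary $\beta$, one critical ancestor of each height $0,1,\dots,m-1$ at strictly increasing positions --- since this is where the step inequality and the finiteness of heights along $\beta$ must be combined; by contrast the computation of heights along $\alpha$ and the closing isotypy comparison are immediate from Lemma~\ref{le2bbb} and the definition of normality.
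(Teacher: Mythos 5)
Your proof is correct and follows essentially the same route as the paper: fix a short full evolution $\alpha$, locate in an arbitrary full evolution $\beta$ a critical ancestor of $X$ of each height $0,\dots,m-1$ at strictly increasing positions, and conclude $A_k\sim B_{r_k}$ by normality. The only (harmless) difference is that you take $r_i$ to be the \emph{last} index with $h(B_j)\leq i$ where the paper takes the first index at which the height steps from $i$ to $i+1$; your choice makes the strict monotonicity of the $r_i$ slightly more immediate.
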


\begin{proof}  Let~$X$ be a   normal vertex  of finite height~$m $. If $m=0$, then~$X$ is primitive and therefore phylogenetic. Assume   that $m\geq 1$ and pick a short full evolution   $\alpha =(A_0 \leftarrow     \cdots \leftarrow A_m )$ for $X=A_m$.  We will prove  that~$\alpha$  is universal, i.e.,  that~$\alpha$ embeds in any full evolution $\beta=
(B_0 \leftarrow     \cdots \leftarrow B_n)$ for $X=B_n$. Since~$\beta$ starts at a vertex   of zero height   and terminates at a vertex  of height~$ m$, the  step inequality   implies that for each $k=0,1,..., m-1$, there is   $r \in \{0,1,..., n-1\}$ such that $h(B_r)=k$ and $h(B_{r+1})=k+1$.  Let $  r_k $  be the smallest such~$r$.   Also, set $r_m=n$.  Since  the segment $B_0 \leftarrow     \cdots \leftarrow B_{r_k}$ of~$\beta$ starts from a vertex  of zero height   and terminates at a vertex  of height~$k$,  the same argument as above shows that  if $k>0$, then there is an index $r <  r_k  $ such that $h(B_r)=k-1$ and $h(B_{r+1})=k$. On the other hand,   $r_{k-1}$ is the smallest such index. Therefore $r_{k-1} \leq r  <r_k$.  Hence, 
$  r_0< r_1< \cdots < r_m $.  We claim that   $A_k\sim B_{r_k}$ for   $k=0,1,..., m$.  For $k=m$, this    is obvious because $A_m=X=B_n=B_{r_m}$.  For    $k<m$,  Lemma~\ref{le2bbb} and the definition of $r_k$ imply that    both $A_k$ and $B_{r_k}$ are critical ancestors of~$X$ of height~$k$.    By the normality of~$X$,  we have  $A_k\sim B_{r_k}$. Therefore,  $\alpha$ embeds in~$\beta$.
\end{proof}

%  \begin{corol}\label{corojanua} A vertex of finite height which descends to a normal vertex is phylogenetic.
%\end{corol}

%\begin{proof} Suppose that~$A$ is an ancestor of a normal vertex~$B$ and~$A$ has a finite height. Then~$B$ also has a finite height.   Theorem~\ref{BEST+}   implies that~$B$ is phylogenetic. Since normality is anti-hereditary and~$B$ is normal, so is~$A$. Again, Theorem~\ref{BEST+}  implies that~$A$ is phylogenetic. 
%\end{proof}

  \subsection{Examples}\label{sect2exampleexamplea---dfdfdf}  In Example~\ref{sect2exampleexamplea---}.3, the vertex~$A$ is primitive and has no critical ancestors. The vertex~$B$ has $A, B$ as critical ancestors. This is clear from the evolutions
  $A \leftarrow B$ and $B \leftarrow C \leftarrow B$. The vertex~$C$ is not a critical ancestor of~$B$ because this quiver does  not have a vertex with height $h(C)+1=3$. 
  The vertex~$C$ is isotypic to~$B$ and therefore has the same critical ancestors $A, B$.  
  In this example and    in Examples~\ref{sect2example}.1,   \ref{sect2example}.2,~\ref{sect2exampleexamplea---}.2 all vertices are normal.   The quiver     in  Example~\ref{sect2exampleexamplea---}.4 has no normal vertices  and its vertices    have no critical ancestors.

   \section{Monotonous   and phylogenetic quivers }\label{Monotonous quivers  and their properties}\label{sect2BISBBB}

 We  introduce monotonous     and       phylogenetic quivers.

   \subsection{Monotonous  quivers }\label{sectmonot}  The height of a vertex  may decrease  under certain evolutions, and we view such   evolutions  as degenerate.   We call a   quiver~$\mathcal O$    \emph{monotonous}\index{quiver!monotonous} if it does not have such degenerate evolutions, i.e., if  for any edge  $A\to B$ in~$\mathcal O$, we have $h(A) \geq h(B)$. This condition may be reformulated by saying that the  descendants  of any  vertex  $B\in \mathcal O$ have the height $\geq  h(B)$. A useful consequence:  in a monotonous quiver, isotypic vertices     have the same height. For   monotonous quivers  we can invert  Theorem~\ref{BEST+} as follows.

  \begin{theor}\label{le2++++dbnnnbnb}   A vertex  of  a monotonous quiver  is phylogenetic if and only if it is normal and has finite height.
\end{theor}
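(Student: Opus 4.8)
The plan is to deduce the two implications separately. The \emph{if} direction---a normal vertex of finite height is phylogenetic---is exactly Theorem~\ref{BEST+} and holds in any quiver, so it requires no new argument and, in particular, no use of monotonicity. Thus the whole content lies in the \emph{only if} direction: I must show that a phylogenetic vertex of a monotonous quiver is normal and of finite height. Finiteness of the height is immediate, since a phylogenetic vertex admits a universal evolution, every universal evolution is short, and a short full evolution has length equal to the (finite) height. So the real task is to prove that a phylogenetic vertex $X$ of a monotonous quiver is normal.

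First I would fix a universal evolution $\alpha=(A_0\leftarrow\cdots\leftarrow A_m)$ for $X=A_m$, where $m=h(X)$; by Lemma~\ref{le2bbb} one has $h(A_k)=k$ for each $k$. Next I would record the two standard consequences of monotonicity that drive the argument: along any evolution the height is non-decreasing (so it rises by $0$ or $1$ at each step, by the step inequality), and isotypic vertices have the same height. To establish normality it suffices to show that every critical ancestor of $X$ of height $k$ is isotypic to $A_k$; then any two critical ancestors of equal height, both being isotypic to the same $A_k$, are isotypic to each other.

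The heart of the proof is the following construction. Let $C$ be a critical ancestor of $X$ of height $k$; by definition there is an evolution $C\leftarrow C'\leftarrow\cdots\leftarrow X$ with $h(C')=k+1$. Choosing a short full evolution $\delta=(D_0\leftarrow\cdots\leftarrow D_k=C)$ for $C$ and concatenating it with this tail produces a full evolution $\epsilon$ for $X$ that passes through $C$. I would then observe, using monotonicity, that $C$ is the \emph{unique} vertex of $\epsilon$ of height exactly $k$: the vertices $D_0,\dots,D_{k-1}$ have heights $0,\dots,k-1$ by shortness of $\delta$, while every vertex after $C$ is a descendant of $C'$ and so has height $\geq k+1$. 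Since $\alpha$ embeds in $\epsilon$ by universality, the vertex of $\epsilon$ isotypic to $A_k$ must itself have height $k$ (isotypic vertices share their height), and hence must equal $C$. Therefore $A_k\sim C$, as desired.

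Assembling these, every critical ancestor of $X$ of height $k$ is isotypic to $A_k$, so $X$ is normal, which together with $h(X)<\infty$ finishes the only-if direction and the theorem. The step I expect to be the crux---and the only place monotonicity is genuinely needed---is the uniqueness of the height-$k$ vertex in $\epsilon$: without the non-decreasing behaviour of the height along the tail one could not exclude a reappearance of height $k$ after $C$, and the embedding of $\alpha$ would no longer pin $A_k$ to $C$. This also explains structurally why the converse of Theorem~\ref{BEST+} can fail in non-monotonous quivers.
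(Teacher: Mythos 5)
Your proposal is correct and follows essentially the same route as the paper: the only-if direction is reduced to normality, and for a critical ancestor $C$ of height $k$ one concatenates a short full evolution for $C$ with the tail from the definition of a critical ancestor, uses monotonicity to see that $C$ is the unique height-$k$ vertex of the resulting full evolution, and then invokes the embedding of the universal evolution to conclude $A_k\sim C$. This matches the paper's proof step for step, including the identification of where monotonicity is genuinely used.
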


\begin{proof} In view of Theorem~\ref{corodddSDl1fgfllla+}, it suffices to prove the \lq\lq only if" part. Consider a phylogenetic vertex~$X$ of  a monotonous quiver  and   a universal evolution  $  \alpha= ( A_0 \leftarrow   \cdots \leftarrow A_m  )$ for $X=A_m$ where $m=h(X)<\infty$. The evolution~$\alpha$ is short, and so, by Lemma~\ref{le2bbb}, the vertices  $A_0, A_1, ..., A_{m-1}$ are critical ancestors of~$X$ of   heights  respectively $0,1,..., m-1$. To prove the normality of~$X$,  consider an arbitrary critical ancestor~$B$  of~$X$ of height $r< \infty$. By the definition of a critical ancestor, there is an evolution $\beta=(B \leftarrow B_1 \leftarrow \cdots \leftarrow X)$     with $h(B_1)=r+1$. By the monotonicity,  all vertices  in~$\beta$ except~$B$ have the height $\geq h(B_1)>r$.
  Pick any   short full evolution~$\gamma$  for $B$. By Lemma~\ref{le2bbb}, all vertices    in $\gamma$ except~$B $ have   height $< r$.  Concatenating $\gamma$ with~$\beta$, we obtain a   full evolution $ \gamma \beta$ for~$X$ whose only vertex  of height~$r$   is~$B$. Since the universal evolution $\alpha$  must embed in $ \gamma \beta $, we   have $A_{r} \sim B $. Thus, $X$ is normal. 
\end{proof}

It is clear that in a monotonous quiver, the property of a vertex  to have finite height is anti-hereditary. Combining this observation with Lemma~\ref{BEST} and Theorem~\ref{le2++++dbnnnbnb}, we obtain the following.

  \begin{corol}\label{le2++++c12}   In a monotonous quiver, the   phylogeneticity of a vertex   is anti-hereditary. 
\end{corol}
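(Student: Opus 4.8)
The plan is to unwind the definition of \emph{anti-hereditary} and then simply assemble the three ingredients that precede the statement: the characterization of phylogeneticity in Theorem~\ref{le2++++dbnnnbnb}, the anti-heredity of normality from Lemma~\ref{BEST}, and the anti-heredity of finite height in a monotonous quiver. Recall that a property is anti-hereditary when, for every vertex having it, all ancestors of that vertex have it as well. So I would fix a phylogenetic vertex~$X$ of the given monotonous quiver and an arbitrary ancestor~$A$ of~$X$, and the entire task reduces to showing that~$A$ is phylogenetic.

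First I would feed~$X$ into Theorem~\ref{le2++++dbnnnbnb}: since the quiver is monotonous and~$X$ is phylogenetic, the theorem gives that~$X$ is normal and has finite height. These are exactly the two properties I now want to push down to the ancestor~$A$. For normality this is immediate: by Lemma~\ref{BEST} normality is anti-hereditary, and~$A$ is an ancestor of the normal vertex~$X$, so~$A$ is normal. For finite height I would invoke the observation stated just before the corollary, namely that in a monotonous quiver finite height is anti-hereditary; since~$X$ has finite height and~$A\leq X$, this yields $h(A)<\infty$.

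Having established that~$A$ is normal and of finite height, I would apply Theorem~\ref{le2++++dbnnnbnb} a second time, now in its converse direction, to conclude that~$A$ is phylogenetic. As~$A$ was an arbitrary ancestor of~$X$, this shows that all ancestors of a phylogenetic vertex are phylogenetic, which is precisely the assertion that phylogeneticity is anti-hereditary.

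There is no genuine obstacle here beyond keeping the directions of the implications straight; the one point that deserves care is the finite-height step. In a general quiver finite height is \emph{hereditary} (it passes to descendants, by Lemma~\ref{fifn}), not anti-hereditary, so the ancestor-side statement I need holds only because of monotonicity: there descendants have height at least that of their ancestors, whence an ancestor of a finite-height vertex again has finite height. I would therefore be explicit that the monotonicity hypothesis is used exactly at this step, so that the proof does not silently misuse Lemma~\ref{fifn} in the wrong direction.
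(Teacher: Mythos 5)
Your proof is correct and follows exactly the paper's intended argument: the paper derives this corollary by combining the observation that finite height is anti-hereditary in a monotonous quiver with Lemma~\ref{BEST} and the two directions of Theorem~\ref{le2++++dbnnnbnb}. Your explicit remark about why monotonicity is needed for the finite-height step (as opposed to the hereditary direction in Lemma~\ref{fifn}) is a faithful elaboration of the paper's one-line justification.
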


 Theorem~\ref{le2++++dbnnnbnb} and Lemma~\ref{le2bbb}  imply  the following claim.

   \begin{corol}\label{le777what} In a monotonous quiver,   a phylogenetic vertex~$X$ has precisely $h(X)$ isotypy classes  of critical  ancestors.
\end{corol}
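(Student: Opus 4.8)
The plan is to count the isotypy classes of critical ancestors of a phylogenetic vertex $X$ in a monotonous quiver by exhibiting exactly one class for each height $0, 1, \dots, h(X)-1$, and no others. First I would fix a universal evolution $\alpha=(A_0 \leftarrow \cdots \leftarrow A_m)$ for $X$, where $m=h(X)$; this exists because $X$ is phylogenetic, and it is short with length $m$. By Lemma~\ref{le2bbb}, the vertices $A_0, \dots, A_{m-1}$ have heights $0, 1, \dots, m-1$ respectively, and each $A_k$ with $k<m$ is a critical ancestor of $X$. This immediately produces at least $m=h(X)$ distinct isotypy classes of critical ancestors, since isotypic vertices have the same height in a monotonous quiver (so classes of different heights are distinct).

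The core of the argument is to show there are no more than $h(X)$ classes, which splits into two sub-claims. First, every critical ancestor of $X$ has height strictly less than $m$: a critical ancestor $A$ comes with an evolution $A \leftarrow A_1 \leftarrow \cdots \leftarrow X$ of nonzero length with $h(A_1)=h(A)+1$, and since $A_1 \leq X$ forces $h(A_1) \leq h(X)=m$ by monotonicity, we get $h(A) \le m-1$. Second, and this is the real content, any two critical ancestors of $X$ of the same height are isotypic. But this is precisely the normality of $X$, which holds by Theorem~\ref{le2++++dbnnnbnb} since $X$ is phylogenetic in a monotonous quiver. Combining the two sub-claims, every critical ancestor has some height $k \in \{0,\dots,m-1\}$ and all critical ancestors of that height lie in a single isotypy class, so there are at most $m$ classes.

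Putting the bounds together gives exactly $h(X)$ isotypy classes, completing the proof. I expect the main obstacle to be the upper bound rather than the lower bound: one must be careful that \emph{every} critical ancestor — not just those appearing in the chosen universal evolution $\alpha$ — has height in the range $\{0,\dots,m-1\}$ and is accounted for by normality. The lower bound is essentially bookkeeping from Lemma~\ref{le2bbb}, and the fact that distinct heights give distinct isotypy classes relies on the monotonicity hypothesis ensuring isotypic vertices share a common height; without monotonicity this correspondence could fail, which is why the statement is restricted to monotonous quivers.
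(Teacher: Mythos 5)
Your proof is correct and follows exactly the route the paper intends: the paper derives this corollary from Theorem~\ref{le2++++dbnnnbnb} (normality of phylogenetic vertices in a monotonous quiver) together with Lemma~\ref{le2bbb} (which supplies critical ancestors of each height $0,\dots,h(X)-1$ via a short full evolution), and you use precisely these two ingredients plus the observation that monotonicity bounds the height of any critical ancestor by $h(X)-1$ and forces isotypic vertices to share a height. Your write-up simply makes explicit the details the paper leaves to the reader.
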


%The next claim says  that in a monotonous quiver, the   evolutionary history of a
 %phylogenetic vertex    is determined,   up to isotypy, by its isotypy class.

  \begin{corol}\label{le777oldest} In a monotonous quiver,      universal evolutions for isotypic  phylogenetic vertices   are themselves isotypic.
\end{corol}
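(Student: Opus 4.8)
The plan is to exploit the rigidity that critical ancestors acquire in a monotonous quiver, together with the normality forced by Theorem~\ref{le2++++dbnnnbnb}. Let $X$ and $Y$ be isotypic phylogenetic vertices, and let $\alpha=(A_0\leftarrow\cdots\leftarrow A_m)$ and $\beta=(B_0\leftarrow\cdots\leftarrow B_n)$ be universal evolutions for $X=A_m$ and $Y=B_n$. First I would observe that in a monotonous quiver isotypic vertices share the same height, so $m=h(X)=h(Y)=n$; thus the two evolutions already have equal length, and to prove them isotypic it suffices to establish $A_k\sim B_k$ for every $k\in\{0,1,\dots,m\}$. Since universal evolutions are short, Lemma~\ref{le2bbb} gives $h(A_k)=h(B_k)=k$ for all $k$.

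Next I would recover, exactly as in the proof of Theorem~\ref{le2++++dbnnnbnb}, that for each $k<m$ the vertex $A_k$ is a critical ancestor of~$X$ of height~$k$: the terminal segment $A_k\leftarrow\cdots\leftarrow A_m$ has nonzero length and satisfies $h(A_{k+1})=k+1=h(A_k)+1$. The same computation shows that $B_k$ is a critical ancestor of~$Y$ of height~$k$. The crucial link is the remark of Section~\ref{sectcritttmoncriteee} that isotypic vertices have the same critical ancestors: since $X\sim Y$, both $A_k$ and $B_k$ are critical ancestors of~$X$ (equivalently of~$Y$). By Theorem~\ref{le2++++dbnnnbnb} the phylogenetic vertex~$X$ is normal, so any two of its critical ancestors of equal height are isotypic; applying this at height~$k$ yields $A_k\sim B_k$ for every $k<m$.

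Finally, the top index is immediate: $A_m=X\sim Y=B_m$ by hypothesis. Combining the two cases gives $A_k\sim B_k$ for all $k=0,1,\dots,m$, which is precisely the statement that $\alpha$ and $\beta$ are isotypic evolutions.

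I do not expect a serious obstacle here; the argument is essentially a repackaging of the ingredients already assembled for Theorem~\ref{le2++++dbnnnbnb} and Corollary~\ref{le777what}. The one point that must be handled with care is the transfer of critical ancestors from~$X$ to~$Y$: normality of a single vertex compares only its own critical ancestors, so the proof genuinely needs the fact that isotypic vertices have identical critical-ancestor sets before normality can be invoked to match $A_k$ with $B_k$. An alternative route---concatenating $\beta$ with an evolution from~$Y$ to~$X$ to form a full evolution for~$X$ and feeding it into the universality of~$\alpha$---also works, but tracking the indices through the appended segment is messier than the symmetric critical-ancestor comparison above.
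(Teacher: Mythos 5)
Your proposal is correct and follows essentially the same route as the paper: both arguments identify $A_k$ and $B_k$ as critical ancestors of height $k$, transfer $B_k$ to a critical ancestor of $X$ via $X\sim Y$, and invoke the normality of $X$ supplied by Theorem~\ref{le2++++dbnnnbnb}. Your write-up merely spells out the details (equality of lengths via monotonicity, the appeal to Lemma~\ref{le2bbb}) that the paper leaves implicit.
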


\begin{proof}    Consider  universal evolutions $   A_0 \leftarrow   \cdots \leftarrow A_m  $     and    $  B_0 \leftarrow   \cdots \leftarrow B_m  $ for   isotypic  phylogenetic vertices  $X=A_m,Y=B_m$ of height~$m$.    For  any $k=0,1,..., m-1$, the   critical ancestors $A_k$ of $X$ and $B_k$  of~$Y$ have the same height~$k$. Since $X \sim Y$, the vertex $B_k$ is also a critical ancestor of~$X$. By Theorem~\ref{le2++++dbnnnbnb}, $X$ is normal and therefore   $A_k \sim  B_k$. Also,
$A_m=X \sim Y=   B_m$.  \end{proof}

%The results of Section~\ref{sectmonot}  fully apply to   such   quivers.

\subsection{Phylogenetic  quivers }  A quiver    is   \emph{phylogenetic}\index{quiver!phylogenetic} if   it is monotonous  and all its vertices  are phylogenetic.     Theorem~\ref{le2++++dbnnnbnb} shows  that  a quiver  is phylogenetic  if and only if it is  monotonous  and all its vertices  are  normal and have finite height.

It is easy to check that the quivers  $\mathcal{SET}$ and    $\mathcal{S}$  from Section~\ref{sect2example}   as well as the tree quiver  ${\mathcal O} (\Gamma)$   from Section~\ref{sect2exampleexamplea---}.2 are phylogenetic.  Further  examples of phylogenetic quivers  can be derived from  various algebraic theories   involving   filtrations. As a specific case, we consider  a quiver  of finite nilpotent groups. (Similar   phylogenetic quivers  can be formed from finite solvable groups and from nilpotent/solvable  finite-dimensional Lie algebras.) Recall the   lower central series $ G_0 \supset G_1   \supset \cdots$ of a group~$G$: by definition, $G_0=G$ and  for $n\geq 0$, the group $G_{n+1}\subset G_n$ is   generated by the commutators $ xyx^{-1} y^{-1}$ with $x\in G_n$ and $y\in G$. The group~$G$ is \emph{nilpotent} if $G_n=\{1\}$ for some $n \geq 0$, and the smallest such~$n$ is  
denoted   $n(G)$. Let~$\mathcal{N}$ be the  quiver   formed by      finite nilpotent groups   and    group epimorphisms $f:G\to H$ such that $ n(G)\geq 1$ and $\Ker (f) \subset G_{n(G)-1}$. It is easy to check that:    groups in~$\mathcal{N}$ are isotypic   if and only if they are isomorphic; a group in~$\mathcal{N}$ is primitive as a vertex    if and only if it is trivial;  for any $G \in \mathcal{N}$,
 the sequence of   quotient groups and projections
 $$\{e\}=G/G_0 \longleftarrow G/G_1 \longleftarrow \cdots \longleftarrow G/G_{n(G)}=G$$
  is a universal evolution for~$G$.   The quiver~$\mathcal {N} $
  is      phylogenetic.

\subsection{Remark} 
Any  monotonous quiver~$\mathcal O$ determines a  quiver~$\mathcal O'$ consisting of all phylogenetic vertices  of~$\mathcal O$ and all edges  between them in~$\mathcal O$.   It is easy to show using Corollary~\ref{le2++++c12}  that the quiver~$\mathcal O'$ is  phylogenetic.

%3. In  a monotonous quiver,   any evolution from a vertex   of finite  height~$r$ to a vertex  of height $s\geq r $ has $s-r  $ critical vertices,   and   their heights are equal to  $r, r+1,..., s-1$.

%1. Dropping all degenerate edges from a    quiver, we always  obtain a monotonous quiver.  This preserves the height of the vertices  and the primitive vertices.

 \section{The evolutionary sequence and the evolutionary forest}\label{The evolutionary sequence and $E$-sequences}

\subsection{The evolutionary sequence}  Here we restrict ourselves to so-called small quivers.   We call   a quiver      \emph{small}\index{quiver!small} if  the isotypy classes     of its vertices     form a set. This  condition  is satisfied in all  our examples.

  Consider a small phylogenetic quiver~$\mathcal O$  and let $\widetilde {\mathcal {O}   }$ be  the  set of isotypy classes of vertices  of~$\mathcal O$.    Each    vertex    $A\in  \mathcal O$ represents an element   $[A] $ of~$ \widetilde {\mathcal {O}   }$.  Two vertices  $A, B \in  \mathcal O$ represent the same element of $\widetilde {\mathcal {O}   }$ if and only if $A \sim B$. The relation~$ \leq$ in the class of vertices  of~$\mathcal {O}$ induces a relation   $ {\,\leq \,}$  in~$\widetilde {\mathcal {O}   }$:   for   $a,b\in \widetilde {\mathcal {O}   }$, we set $a{\,\leq \,} b$   if $A\leq B$ for some
(and then for all)  $A, B \in\mathcal O$ representing respectively~$a$ and~$b$.   The relation  ${\,\leq \,}$ in   $\widetilde {\mathcal {O}   }$ is a partial order, i.e.,  it is reflexive, transitive, and antisymmetric  (if $a{\,\leq \,} b$ and $b{\,\leq \,} a$, then $a=b$).
 Clearly, $\widetilde {\mathcal {O}   }=\amalg_{m\geq 0}\,   \mathcal {O}_m$ where $\mathcal {O}_m  $ is the set of  isotypy classes of    vertices   of~$\mathcal O$ of height~$m$. In particular,  $ \mathcal {O}_{0}$ is the set of isotypy classes of primitive vertices  of $ \mathcal {O}$.

 For each $m\geq 1$, we define the \emph{parental map}\index{parental map}
\begin{equation}\label{parent}  p=p_m: \mathcal {O}_m \to \mathcal {O}_{m-1} \end{equation}
as follows.
 For any   $A \in \mathcal {O} $ of height $m\geq 1$, consider a universal evolution
 $   A_0 \leftarrow   \cdots \leftarrow A_{m-1} \leftarrow A_m= A $  and set $p ([A])= [A_{m-1}] $.  By Corollary~\ref{le777oldest},  this yields a well-defined map \eqref{parent}.
The resulting sequence of sets and maps
\begin{equation}\label{elseq}  \mathcal {O}_{0}  \stackrel{p} {\longleftarrow} \mathcal {O}_{1}  \stackrel{p} {\longleftarrow} \mathcal {O}_{2}  \stackrel{p} {\longleftarrow} \mathcal {O}_{3} \stackrel{p} {\longleftarrow} \cdots   \end{equation}
  is called the \emph{evolutionary sequence}\index{evolutionary sequence} of~$\mathcal {O}$.

\begin{theor}\label{aboutpreceq} Let $a \in \mathcal {O}_m$ and $b \in \mathcal {O}_n$ with $m,n \geq 0$. Then:

(i)   $a{\,\leq \,} b$ if and only if  $m\leq n$ and $a {\,\leq \,} p^{n-m} (b) \in \mathcal {O}_m$;

(ii) For $m=n=0$, we have  $a{\,\leq \,} b \Longleftrightarrow a=b$;

   (iii)  For $m=n \geq 1$, if $a{\,\leq \,} b$, then $p(a)=p(b) \in \mathcal {O}_{m-1}$;
   
   (iv) For $m=n-1$, the equality $ a=p(b)$ holds if and only if  there is an edge   $B\to A $ in~${\mathcal {O}}$ such that   $A,B\in \mathcal O$ represent respectively the isotypy classes $a, b$. 
\end{theor}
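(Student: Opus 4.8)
The plan is to first record a convenient reformulation of the iterated parental map, and then dispatch the four parts, with part~(i) carrying essentially all of the weight. \emph{Preliminary observation:} fix $b \in \mathcal {O}_n$, choose a representative $B$, and fix a universal evolution $\beta = (B_0 \leftarrow \cdots \leftarrow B_n = B)$ for $B$. Since every initial segment $B_0 \leftarrow \cdots \leftarrow B_k$ is itself a universal evolution for $B_k$ (this is exactly what the proof of Theorem~\ref{le2} establishes), the definition of $p$ together with the well-definedness supplied by Corollary~\ref{le777oldest} gives $p([B_k]) = [B_{k-1}]$ for each $k \geq 1$. Iterating, $p^{\,n-m}(b) = [B_m]$ for every $0 \leq m \leq n$; in particular $p^{\,n-m}(b) \in \mathcal {O}_m$ and is represented by the height-$m$ vertex of \emph{any} universal evolution for $B$. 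I would use this identification throughout.

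For part~(i) the backward implication is immediate: if $a \leq p^{\,n-m}(b) = [B_m]$, pick $A \in a$ with $A \leq B_m$; since $B_m \leq B$ along $\beta$, transitivity gives $A \leq B$, i.e. $a \leq b$. The forward implication is the crux. Assuming $a \leq b$, pick $A \in a$, $B \in b$ with $A \leq B$; monotonicity forces $m = h(A) \leq h(B) = n$. The idea is to build one convenient full evolution for $B$ and then invoke universality of $\beta$: concatenate a short (hence universal) full evolution $A_0 \leftarrow \cdots \leftarrow A_m = A$ for $A$ with any evolution $A = C_0 \leftarrow \cdots \leftarrow C_l = B$, obtaining a full evolution $\gamma$ for $B$ in which $A$ sits at position $m$. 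Since $\beta$ embeds in $\gamma$, there are indices $0 \leq t_0 < t_1 < \cdots < t_n$ with $B_k$ isotypic to the $t_k$-th vertex of $\gamma$. The hard part will be the counting step: because $t_0 < \cdots < t_m$ are distinct nonnegative integers, $t_m \geq m$, so the vertex of $\gamma$ isotypic to $B_m$ sits at position $t_m \geq m$, that is, at or after $A$; hence $A \leq (\text{that vertex}) \sim B_m$, giving $a \leq [B_m] = p^{\,n-m}(b)$. I expect this index bound $t_m \geq m$, pinning the height-$m$ vertex of $\beta$ no earlier than $A$, to be the only place where real work is needed.

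Part~(ii) I would settle directly from primitivity: if $a, b \in \mathcal {O}_0$ and $a \leq b$, then representatives $A \leq B$ with $B$ primitive force $B \leq A$, so $A \sim B$ and $a = b$; the converse is reflexivity. For part~(iii), take $a, b \in \mathcal {O}_m$ with $a \leq b$ and representatives $A \leq B$. In a universal evolution for $A$ the height-$(m-1)$ vertex $A_{m-1}$ is a critical ancestor of $A$ (by Lemma~\ref{le2bbb} and the step inequality), hence a critical ancestor of its descendant $B$; its class is $p(a)$. Since $B$ is phylogenetic it is normal (Theorem~\ref{le2++++dbnnnbnb}), so all critical ancestors of $B$ of height $m-1$ are isotypic; comparing $A_{m-1}$ with the height-$(m-1)$ vertex of a universal evolution for $B$ yields $p(a) = p(b)$.

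Finally, for part~(iv) I would argue both directions from the preliminary observation. If there is an edge $B \to A$ with $A \in a \in \mathcal {O}_m$ and $B \in b \in \mathcal {O}_{m+1}$, then appending this edge to a short full evolution for $A$ produces a short, hence universal (Theorem~\ref{le2}), full evolution for $B$ whose height-$m$ vertex is $A$, so $p(b) = [A] = a$. Conversely, if $p(b) = a$, the final edge of any universal evolution for a representative $B$ of $b$ is an edge $B \to B_m$ with $[B_m] = p(b) = a$, which exhibits the required edge.
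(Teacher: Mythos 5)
Your proof is correct, and for the two substantive parts it takes a genuinely different route from the paper. The paper funnels both (i) and (iii) through a single auxiliary result (Lemma~\ref{aboutmonooo}): it embeds the universal evolution $\beta$ for $B$ into a concatenation $\alpha\gamma$, uses monotonicity to know that the vertex isotypic to $B_{n-1}$ has height $n-1$, locates that vertex by inspecting which heights can occur where, and then iterates $n-m$ times to descend from $b$ to $p^{n-m}(b)$. You build essentially the same concatenated full evolution for (i), but you locate $B_m$ by the purely combinatorial index bound $t_m\geq m$ coming from the strict monotonicity of the embedding indices; this pins $B_m$ at or after $A$ in one shot, with no height bookkeeping inside the embedding and no iteration (monotonicity enters only to get $m\leq n$). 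For (iii) you bypass the lemma entirely and instead observe that the height-$(m-1)$ vertices of universal evolutions for $A$ and for $B$ are both critical ancestors of $B$ of height $m-1$, so normality of $B$ (Theorem~\ref{le2++++dbnnnbnb}) forces them to be isotypic. Your preliminary identification $p^{n-m}(b)=[B_m]$, justified via Theorem~\ref{le2}(iv) and Corollary~\ref{le777oldest}, is what makes both arguments click. Parts (ii) and (iv) coincide with the paper's. The trade-off: the paper's lemma is reusable and makes the role of monotonicity explicit at each step, while your version is shorter, isolates the one place where real counting happens, and shows that (iii) is really a consequence of normality rather than of the embedding mechanics.
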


Theorem~\ref{aboutpreceq} is proved   below. Note   that by Claim~(i), the partial order   in $\widetilde {\mathcal {O}   } $ is fully  determined by its restrictions to the sets $\{{\mathcal O}_m\}_m$ and the parental maps.

\subsection{Proof of Theorem~\ref{aboutpreceq}} We begin with a lemma.

\begin{lemma}\label{aboutmonooo} Let   $    \alpha=(A_0 \leftarrow   \cdots \leftarrow A_m) $ and $ \beta=(  B_0 \leftarrow   \cdots   \leftarrow B_n    )$ be    universal evolutions for  vertices  $ A_m,  B_n$ in  a monotonous quiver  such that  $A_m \leq B_n$. If $m=n$, then $A_{m-1} \sim B_{n-1}$. If   $m<n   $, then $  A_{m}\leq B_{n-1}$.
\end{lemma}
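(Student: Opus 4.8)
The plan is to treat the two cases $m=n$ and $m<n$ separately, reducing each to a single fact: the normality of $B_n$. Since $B_n$ is phylogenetic and the quiver is monotonous, Theorem~\ref{le2++++dbnnnbnb} shows that $B_n$ is normal, so any two critical ancestors of $B_n$ of equal height are isotypic; this is the engine of the whole argument. Two further ingredients are used throughout. First, a universal evolution is short, so by Lemma~\ref{le2bbb} the heights along $\alpha$ and $\beta$ satisfy $h(A_k)=k$ and $h(B_k)=k$ for all $k$. Second, a critical ancestor of a vertex is a critical ancestor of each of its descendants (Section~\ref{sectcritttmoncriteee}); combined with the hypothesis $A_m\leq B_n$, this lets me transport critical ancestors of $A_m$ to critical ancestors of $B_n$.

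For the case $m=n$ (where necessarily $m\geq 1$), the last step of $\alpha$ raises the height from $m-1$ to $m$, so $A_{m-1}$ is a critical ancestor of $A_m$; since $A_m\leq B_n$, it is also a critical ancestor of $B_n$, of height $m-1=n-1$. Likewise $B_{n-1}$ is a critical ancestor of $B_n$ of height $n-1$. By the normality of $B_n$, these two critical ancestors of equal height are isotypic, giving $A_{m-1}\sim B_{n-1}$.

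For the case $m<n$ I would first fix an evolution $\gamma$ from $A_m$ to $B_n$ (it exists because $A_m\leq B_n$) and set $\delta=\alpha\gamma$, a full evolution for $B_n$ obtained by concatenation. Along $\gamma$ the height is non-decreasing and rises from $m$ to $n$ in unit steps, by monotonicity together with the step inequality \eqref{onestep}; hence $\gamma$ contains a critical vertex $C$ of height $n-1$ (cf. Remark~3 in Section~\ref{sect2BISBBB}). As a critical vertex of the full evolution $\delta$ terminating at $B_n$, this $C$ is a critical ancestor of $B_n$ of height $n-1$, so $C\sim B_{n-1}$ by the normality of $B_n$. Since $C$ lies on $\gamma$, which starts at $A_m$, we have $A_m\leq C$, and therefore $A_m\leq C\leq B_{n-1}$, i.e. $A_m\leq B_{n-1}$.

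The only place requiring real care is the second case, where I must confirm that the vertex $C$ furnished by the height bookkeeping is genuinely a critical ancestor of $B_n$ (so that normality applies) and at the same time an honest descendant of $A_m$. The first point holds because $h(C)=n-1<n=h(B_n)$ forces $C\neq B_n$, so $C$ has a successor in $\delta$ identical to its successor in $\gamma$, and criticality is a local condition on consecutive vertices; the second point is the elementary observation that every vertex of an evolution is a descendant of its initial vertex $A_m$. Once $C$ is secured, both cases close through the single normality statement for $B_n$.
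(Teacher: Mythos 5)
Your proof is correct, but it takes a genuinely different route from the paper's. The paper argues directly from the universality of $\beta$: it forms $\alpha\gamma$, embeds $\beta$ in it, and then locates the isotypic image of $B_{n-1}$ by pure height bookkeeping (monotonicity forces isotypic vertices to have equal height, all vertices of $\gamma$ have height $\geq m$, and all vertices of $\alpha$ other than $A_m$ have height $<m$), which pins the image to $A_{m-1}$ when $m=n$ and to a vertex of $\gamma$ when $m<n$. You instead route everything through normality: Theorem~\ref{le2++++dbnnnbnb} makes $B_n$ normal, and you exhibit two critical ancestors of $B_n$ of height $n-1$ --- namely $B_{n-1}$ on one hand, and $A_{m-1}$ (case $m=n$) or the critical vertex $C$ of $\gamma$ supplied by Remark~3 of Section~\ref{sect2BISBBB} (case $m<n$) on the other --- and conclude by normality. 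Your identification of $C$ as a genuine critical ancestor is sound (the height sequence along $\gamma$ is non-decreasing with unit increments, so it passes through $n-1$ with a successor of height $n$, and the local criticality condition transfers to the full evolution $\delta$), and the transport of critical ancestors along $A_m\leq B_n$ is exactly the hereditary property stated in Section~\ref{sectcritttmoncriteee}. What each approach buys: the paper's proof is self-contained modulo Lemma~\ref{le2bbb} and uses only the universality of $\beta$ plus monotonicity, whereas yours reuses the heavier machinery of Theorem~\ref{le2++++dbnnnbnb} (whose proof internally repeats essentially the same embedding argument), so yours is shorter on the page but logically less economical; both proofs in fact use only that $\alpha$ is short, not that it is universal.
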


\begin{proof}  
By Lemma~\ref{le2bbb},  $h(A_k)=k$  for all $k\leq m$ and $h(B_l)=l$ for all $l\leq n$. Since
 $A_m \leq B_n$,   there is an evolution $\gamma $ from~$A_m $   to~$B_n$, and then $\alpha \gamma$ is an  evolution
  from $A_0$ to~$B_n$.  Since~$\beta$ is universal, it embeds in $\alpha \gamma$. So,   $B_{n-1}$   is isotypic to a vertex  in   $\alpha\gamma$  of the same height $ n-1$ (here   we use the monotonicity of the quiver).  Clearly,  all vertices  in~$\gamma$ have  height $\geq  h(A_m)=m $. If   $m=n$, then  the only vertex   of height $ n-1=m-1$ in $\alpha\gamma$ is $A_{m-1}$. Thus, $A_{m-1} \sim B_{n-1}$.
If   $m<n$, then  all vertices  in~$\alpha$ other than $A_m$ have height $ <m \leq  n-1$. In this case,    $B_{n-1}$   has to be isotypic to a vertex,~$C$, appearing  in the evolution~$ \gamma$, and so   $A_{m}\leq C \leq B_{n-1}  $ and $A_{m} \leq B_{n-1}  $. \end{proof}

We now prove Theorem~\ref{aboutpreceq}.  We start with Claim (i). Suppose   that     $a {\,\leq \,} b$. By the monotonicity, $m \leq n$. If $m=n$, then    $a \leq b=p^{n-m}(b) $. If $m<n$, then the second claim of Lemma~\ref{aboutmonooo} implies that $a {\,\leq \,} p(b)$. Iterating, we get   $a {\,\leq \,} p^{n-m} (b)$. Conversely, suppose that $m \leq n$ and   $a \leq p^{n-m}(b)$. It  follows from the definitions that $p (b) {\,\leq \,} b$ for all~$b$. Hence,   $$a {\,\leq \,}  p^{n-m} (b) {\,\leq \,} p^{n-m-1}  (b) {\,\leq \,}  \cdots {\,\leq \,} p (b) {\,\leq \,} b  .$$

Claim (ii) holds because any vertex   of zero height   is primitive   and so is isotypic to all its ancestors. Claim~(iii)   follows from the first claim of  Lemma~\ref{aboutmonooo}.  We  prove Claim (iv). If $ p(b)=a$, then pick a  representative $B\in {\mathcal O} $ of~$b$ and  a  universal evolution
 $   B_0 \leftarrow   \cdots \leftarrow B_{n-1} \leftarrow B_n= B$. The   edge  $B_{n-1} \leftarrow B_n$   satisfies  our conditions because   $ [B_{n-1}] =p(b)=a$.  Conversely, suppose that there is an edge   $  A \leftarrow B $ in~${\mathcal {O}}$ such that   $A,B  $  represent respectively $a,  b$. Concatenating a short full evolution
   for~$A$    with the 1-edge  evolution $ A \leftarrow B$, we obtain a full evolution $     \cdots \leftarrow A \leftarrow  B $  of length $h(A)+1=m+1=n= h(B) $. This evolution is short and, by Theorem~\ref{le2}(iii), universal. Therefore $p(b)= [A]=a$.

 \subsection{The   evolutionary forest}\label{sectevolgraph} Any sequence  of sets and maps
\begin{equation*}\label{elseqarb}  P_{0}  \stackrel{p} {\longleftarrow} P_{1}  \stackrel{p} {\longleftarrow} P_{2}  \stackrel{p} {\longleftarrow} P_{3}  \stackrel{p} {\longleftarrow}  \cdots   \end{equation*}
determines a graph~$\Gamma$: take the disjoint union  $\amalg_{m\geq 0}\,  P_m$ as the set of vertices  and connect
each    $a \in P_{m }$ with $m\geq 1$     to   $ p(a)\in P_{m-1}$ by an edge.  This graph  is a forest in the sense that all its components are trees. Clearly,  every component  of~$\Gamma$  has a unique vertex  in $ P_0$. If   $\card (P_0)=1$,
 then $\Gamma $ is a tree. In this case we  define   a metric~$d$ in the set $\amalg_{m\geq 0}\,  P_m$: For  any  elements $a,b$ of this set,   the distance   $d(a,b)$ is the minimal number of edges in a path in  $\Gamma $ from~$a$ to~$b$.
Clearly, $d(a,b)=k+l$ where $k,l \geq 0 $ are minimal  integers  such that $p^k(a)=p^l(b)$. 
Note  that for each $m\geq 0$, the restriction of~$d$ to the set $P_m $ takes only even values and is an ultrametric (the definition of an ultrametric is recalled in Section~\ref{sect30}).

Applying  these  constructions  to the evolutionary  sequence  \eqref{elseq}, we obtain a   forest   called  the  \emph{evolutionary forest}\index{evolutionary forest} of the  quiver~$\mathcal {O} $.
If  $\card ({\mathcal {O}}_0)=1$, i.e., if all  primitive vertices  of~$\mathcal {O}$ are isotypic, 
 then we also obtain a metric  in   the set~$\widetilde {\mathcal {O}   }$ of isotypy classes of vertices  of~$\mathcal O$.

 \subsection{Examples}\label{sect2exampleexampleaopdfts}   The evolutionary forest of the quiver  $\mathcal{SET}$     is a single point.  The evolutionary forest  of the quiver  $\mathcal{S}$ from  Example~\ref{sect2example}.2  is  a wedge of a countable number of segments. %the half-line $[1, \infty)\subset \RR$.
% Under this identification, the vertex   represented by   $m$-element sets is identified with $m\in \RR$ for all $m\geq 1$. 
The evolutionary forest  of the
 tree quiver   $  \Gamma $ from Example~\ref{sect2exampleexamplea---}.2  can be identified with~$\Gamma$.

\section{$E$-sequences  and reconstruction}\label{Timing and reconstruction}

\subsection{$E$-sequences}\label{$E$-sequences}
Axiomatizing the properties of   the evolutionary sequences, we define so-called $E$-sequences.  An   \emph{$E$-sequence} consists   of partially ordered sets $(P_m, {\,\leq \,})_{m\geq 0}  $   and maps
$(p=p_m:P_m\to P_{m-1} )_{m\geq 1}  $
%\begin{equation}\label{elseq}  P_0  \stackrel{p_1} {\longleftarrow}   P_{1}  \stackrel{p_2} {\longleftarrow} %P_{2}  \stackrel{p_3} {\longleftarrow} \cdots   \end{equation}
such that the partial order  in $P_0$ is trivial and   for any   $a, b\in P_m$ with  $m\geq 1$, if    $a{\,\leq \,} b$, then $p(a)=p(b)$. We will sometimes  use the strict partial order $<$ in  $P_m$ defined   by $a<b$ if $a\leq b$ and $a\neq b$.
  Two $E$-sequences  $P, P'$
are \emph{isomorphic}  if there are  bijections $\{f_m:P_m\to P'_m\}_{m \geq 0}$ such that for all $m\geq 1$, we have $p' f_m=f_{m-1} p:P_m \to P'_{m-1}$   and $f_m$ is order-preserving in the sense that $a{\,\leq \,} b \Longleftrightarrow f_m(a) {\,\leq \,} f_m(b)$ for any $a,b\in P_m$.

 By Theorem~\ref{aboutpreceq}, the evolutionary sequence of a  small phylogenetic  quiver  is an $E$-sequence. We   show now  that   all  $E$-sequences arise in this way.

  \begin{theor}\label{le777esche} Every $E$-sequence is isomorphic to  the evolutionary sequence of a  small phylogenetic quiver.
\end{theor}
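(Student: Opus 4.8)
The plan is to build the required phylogenetic quiver directly from the combinatorial data. Given an $E$-sequence $(P_m,\leq)_{m\ge 0}$ with maps $p=p_m\colon P_m\to P_{m-1}$, I take as vertices of a quiver $\mathcal O$ the disjoint union $V=\amalg_{m\ge0}P_m$, and I introduce two kinds of edges: a \emph{parental} edge $a\to p(a)$ for every $a\in P_m$ with $m\ge1$, and an \emph{order} edge $b\to a$ for every strict comparison $a<b$ inside some $P_m$. Writing $\ell(a)=m$ for $a\in P_m$, I observe that a parental edge drops $\ell$ by one while an order edge preserves it; hence along any evolution $\ell$ is non-decreasing from the initial to the terminal vertex and rises by at most one at each step.

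First I would pin down the elementary invariants. A vertex $a\in P_0$ has no outgoing edges, so its only ancestor is itself and it is primitive; conversely no $a\in P_m$ with $m\ge1$ is primitive, since $p(a)$ is an ancestor of $a$ while the level count forbids $a$ from being an ancestor of $p(a)$. The chain $p^m(a)\leftarrow\cdots\leftarrow p(a)\leftarrow a$ is a full evolution of length $m$, so $h(a)\le\ell(a)$; and since a full evolution must start at a level-$0$ vertex while $\ell$ increases by at most one per step, its length is at least $\ell(a)$. Thus $h(a)=\ell(a)<\infty$, every edge $X\to Y$ satisfies $h(X)\ge h(Y)$ so $\mathcal O$ is monotonous, and the same level count together with antisymmetry of each $P_m$ forces $a\sim b\Rightarrow a=b$. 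In particular $\mathcal O$ is small with $\widetilde{\mathcal O}=V$ and $\mathcal O_m=P_m$.

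The heart of the argument is the identification of the ancestors of a fixed $a\in P_m$ level by level, and this is the step I expect to be the main obstacle, since it is exactly where the defining property of an $E$-sequence is used. The key input is the axiom $x\leq y\Rightarrow p(x)=p(y)$ in $P_k$, which I would first upgrade to: $p$ is constant on each connected component of the comparability graph of $P_k$. Using this I would prove by downward induction that the set of ancestors of $a$ lying in $P_{m-j}$ equals the down-set $\{x\in P_{m-j}: x\le p^j(a)\}$: in passing from level $m-j$ to $m-j-1$ one applies $p$ to a down-set of elements all comparable to $p^j(a)$, these all share the parent $p^{j+1}(a)$, and one then closes downward.

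With this description the remainder is bookkeeping. A critical ancestor of $a$ of height $k$ must, by the step inequality, arise through a parental edge from an ancestor $y\in P_{k+1}$; since every such $y$ lies in $\{x: x\le p^{m-k-1}(a)\}$, it satisfies $p(y)=p^{m-k}(a)$, so there is a unique critical ancestor at each height and $a$ is normal. By Theorem~\ref{le2++++dbnnnbnb}, $\mathcal O$ is therefore phylogenetic. Finally I would read off the evolutionary sequence: the identification $\mathcal O_m=P_m$ carries the induced partial order to $\leq$ (once more by the level count), and the $p$-chain is the short, hence universal (Theorem~\ref{le2}(iii)), full evolution for $a$, so the parental map of $\mathcal O$ is $p$. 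This exhibits the given $E$-sequence as isomorphic to the evolutionary sequence of the phylogenetic quiver $\mathcal O$, as required.
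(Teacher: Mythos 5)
Your quiver is exactly the one the paper builds: vertices $\amalg_{m\ge 0}P_m$, one edge for each strict comparison inside a level and one parental edge $a\to p(a)$, and your preliminary bookkeeping ($h=\ell$, monotonicity, triviality of isotypy, primitives $=P_0$) matches the paper's analysis of a ``typical evolutionary chain''. The difference is only in how phylogeneticity is then established. The paper argues directly that every full evolution for $a\in P_m$ is forced, by the axiom $x\leq y\Rightarrow p(x)=p(y)$ applied at each level transition, to start at $p^m(a)$ and pass through $p^{m-1}(a),\dots,p(a)$ in order; hence the chain $p^m(a)\leftarrow\cdots\leftarrow p(a)\leftarrow a$ embeds in every full evolution, is universal outright, and this gives phylogeneticity and identifies the parental map in one stroke. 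You instead determine the entire ancestor set of $a$ at each level (the down-set of $p^j(a)$), deduce that $a$ has a unique critical ancestor at each height, conclude normality, and invoke the characterization of Theorem~\ref{le2++++dbnnnbnb}, recovering universality of the $p$-chain at the end via Theorem~\ref{le2}(iii). Both arguments hinge on the identical use of the $E$-sequence axiom at the level transitions; the paper's route is shorter because it needs only the forced vertices of a full evolution rather than the full ancestor sets, while your detour through normality makes the verification of the hypotheses of the general theory explicit and is a perfectly sound alternative.
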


 \begin{proof} Given an $E$-sequence $P$, we define a  quiver~$\mathcal O $ as follows. The   vertices  of~$\mathcal O$ are the elements of the set $\amalg_{m\geq 0}\,  P_m$. For  all $a,b \in P_m$ with $m\geq 0$,  there is a  single edge     $a\to b$ if $b < a$  and a  single edge    $a \to p (a) \in P_{m-1}$ if $m\geq 1$. We claim that the quiver~$\mathcal O$ is phylogenetic.  A typical evolutionary chain    in~$\mathcal O$   starts  at some   $a_0 \in  P_{m }$ with $m  \geq 0$ and    consecutively takes  bigger and bigger elements $a_0< a_1<  \cdots  $ of   $P_m$. At some step, one either stops at $a_i\in P_m$ or  proceeds to the higher level by  taking for $a_{i+1}$ any   element of the set $p^{-1}(a_i) \subset P_{m+1}$ (if this set is non-void). Then   the whole process is repeated starting at $a_{i+1}$, etc. (One has to stop eventually.) The antisymmetry of   the  partial order   implies that  an evolution in~$\mathcal O$ of non-zero length  cannot start and end at the same vertex. Consequently, isotypic vertices  of~$\mathcal O$  must coincide so that~$\mathcal O$ is small and  $\widetilde{\mathcal O} = \amalg_{m\geq 0}\,  P_m$.  The primitive vertices  of~$\mathcal O$ are   the elements of   $P_0$ (here we use    that the partial order  in $P_0$ is trivial).   All full evolutions for any $a \in P_m$ with $m\geq 1$   start at $p^m(a)\in P_0$ and  include the vertices 
 $\{p^k(a)\}_{k=1}^m $.  Therefore the full  evolution \begin{equation*}\label{uniuni} p^m(a) \leftarrow p^{m-1} (a) \leftarrow \cdots \leftarrow p(a) \leftarrow a  \end{equation*}
    is universal for~$a $.   As a consequence,     $h(a)=m$ and ${\mathcal O}_m=P_m$. Therefore the quiver~$\mathcal O$ is monotonous and all its vertices  are phylogenetic. It is clear that   the evolutionary sequence of~$\mathcal O$ is isomorphic to~$P$.
      \end{proof}

\subsection{Reconstruction}\label{Reconstruction} The  main objective  of phylogenetics is to recover the   evolutionary tree    from  the current generation of species. We  briefly discuss reconstruction in  our context. The  reconstruction aims to   recover the   initial segment   
\begin{equation}\label{elseqppppp}  P_{0}  \stackrel{p} {\longleftarrow} P_{1}  \stackrel{p} {\longleftarrow} \cdots     \stackrel{p} {\longleftarrow} P_N  \end{equation}
of an  $E$-sequence  from the set $P_N$, eventually endowed with additional data.
 For simplicity, we   assume here that   $\card(P_0)=1$ and all the  maps $p$ are surjective.   One well-known approach to   reconstruction   uses  the  ultrametric~$\rho$ in $P_N$ defined by letting   the distance $\rho(a,b)$ between any    $a,b\in P_N  $   to be    the minimal    integer $k  \geq 0$   such that $p^k(a)=p^k(b)$. (Note that $\rho(a,b)=\frac{1}{2}d(a,b)$ where~$d$ is the metric defined in Section~\ref{sectevolgraph}.) 
  The   sets and maps~\eqref{elseqppppp} can be fully recovered from  the ultrametric space $(P_N, \rho)$. Namely, for $s=0, 1,...,N$, the elements of $P_s$ can be identified with   balls  in $ P_N$ or radius $N-s$; for $s \geq 1$, the map     $p :P_s\to P_{s-1}$   carries    a  ball $B \subset P_N$ or radius $N-s$  to the unique ball  of radius $N-s+1$ in $P_N$ containing~$B$. %Here a  ball or radius $R\geq 0$ centered in $a\in P_N$ is the set $\{b\in P_N \, \vert \, \rho(a,b)\leq R\}$.
   Next, we encode in terms of $P_N$ the given strict partial order $<$   in $P_1,..., P_N$. This partial order induces a binary relation $\prec$ in $P_N$  by the rule $a \prec  b$ if   $a\neq b$  and $p^{k-1}(a) < p^{k-1}(b)$ for  $k=\rho(a,b)\geq 1$. Conversely, the  strict partial order $<$ in  $P_1,..., P_N$   can be fully recovered from  $\prec$ and~$\rho$: two  balls  $B, B'\subset P_N$  of the same radius~$ r$  satisfy $B  < B'$ if and only if    $a \prec b$ and $\rho(a,b)=  r+1$ for some (and then for all) $a\in B , b\in B'$. In particular. for $r=0$, two   points $a,b\in P_N$ satisfy $a< b$ if and only if $a\prec  b $ and $  \rho(a,b)=1$.

\subsection{Remark} So far we have studied evolutions in a static world in which all vertices  (species) coexist together.  To relate  to the real world, we briefly discuss the timeline.
One   way to involve time is to accept the following three  principles:

 (i) (the moment zero) all primitive vertices    come  to existence at the same moment of time, the moment zero;

 (ii) (the molecular clock) the  time needed for an accomplishment of an  evolutionary chain is equal to   a   constant   coefficient~$C$ times the length of the chain;

  (iii) (the least wait) every non-primitive vertex   comes to existence at the earliest possibility, i.e., at the end of a short  full evolution.

These   principles ensure that each  vertex~$X$ of a phylogenetic quiver~$\mathcal O$ evolves in the moment of time $C \cdot h(X)$. The sequence \eqref{elseq} is then the sequence of generations: each set $\mathcal O_m$ with $m\geq 0$ is the generation of   vertices    that have evolved at the moment of time $Cm$.

The principles (i) and (ii) above can be generalized by  agreeing that (i)$'$ each   primitive vertex comes to existence at a certain moment of time (not necessarily the same) and (ii)$'$ each edge carries a   positive   length and the  length of any evolution is the sum of the lengths of the constituent edges.

\subsection{Remark} A  binary relation $\prec$ in an ultrametric space $(X, \rho)$ arises as in Section~\ref{Reconstruction}  from the initial segment of length $N\geq 1$ of an $E$-sequence iff

(i) the ultrametric $\rho$ takes   values in the set $\{0,1,..., N\}$;

   (ii) $a\prec b \Longrightarrow b \nprec a$ for all $a,b \in X$ (in particular, $a\nprec a$ for all $a  \in X$);

   (iii) for any   distinct $a,b,c \in X$,

   - if $a\prec b  $ and $\rho(a,c)< \rho(a,b)$, then $c \prec b$;

- if $a\prec b  $ and $\rho(b,c)< \rho(a,b)$, then $a \prec c$;

- if $a\prec b  \prec c$ and $\rho(a,b)= \rho(a,c)=\rho(b,c)$, then $a \prec c$.

%Moreover, every triple $(A,d, \prec)$ consisting of a set $A$, an ultrametric~$d$ in $A$ taking values in  $\{0,1,...,N\}$, and a relation $  \prec $ in~$A$ satisfying (i) and (ii) (with $P_N$ replaced by~$A$) arises as above from a restricted $E$-sequence of length~$N$. One such sequence is defined as follows: take for $P_r$ the set of $d$-balls   $B \subset A$ or radius $N-r$,   let   $p:P_r\to P_{r-1}$   carry  a ball~$B $   in the ball of radius $N-r+1$ centered in any point of~$B$, and define a partial order $\leq $ in $P_r$ by $B  \leq B'$ if $B =B'$ or $a \prec b$ for some $a\in B , b\in B'$.
%
% The key fact concerning the reconstruction of the sequence~\eqref{elseqppppp} is this: if all maps in this sequence are surjective, then this sequence can be recovered  .

 %and then to use these structures to recover the segment \eqref{elseqppppp} at least under favorable assumptions.

 \section{Clades in phylogenetic quivers }\label{sectclade}

\subsection{Clades}\label{sect30clades} Any vertex   $A $ of a   quiver~$\mathcal O$ determines a   quiver  ${\mathcal O}_A$ formed by all descendants of~$A$ and all edges between them  in~$\mathcal O$. Following the standard taxonomic terminology, we   call ${\mathcal O}_A$ the \emph{clade}\index{clade} of~$A$. Clearly, $A\in {\mathcal O}_A$. We   state   a few  properties of  ${\mathcal O}_A$.

\begin{lemma}\label{thththt2++++rreee} The  primitive vertices  of   ${\mathcal O}_A$ are all the vertices  of~$\mathcal O$ isotypic to~$A$.   In particular, $A$ is a primitive vertex  of ${\mathcal O}_A$.
\end{lemma}

\begin{proof}  If    $B \in {\mathcal O}_A$  is primitive in ${\mathcal O}_A$, then  the relation $A \leq B$ implies that $B \leq A$, i.e., that $A ,B $ are isotypic in~$\mathcal O$.  Conversely, if  $B\in \mathcal O$ is isotypic to~$A$, then $B\in   {\mathcal O}_A$ and for any $C\in   {\mathcal O}_A$, we have $   B \leq A \leq C$ so that $B \leq C$.
Thus, $B$   is primitive as a vertex  of ${\mathcal O}_A$.  
\end{proof}

\begin{lemma}\label{thththt2++++rree++e}    All vertices  of ${\mathcal O}_A$ have   finite height in ${\mathcal O}_A$.
\end{lemma}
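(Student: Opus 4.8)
The plan is to produce, for each vertex $B$ of $\mathcal{O}_A$, an explicit full evolution for $B$ \emph{inside} $\mathcal{O}_A$; recall from Section~\ref{sect22hhfull} that a vertex has finite height precisely when it is a descendant of a primitive vertex, i.e., when it is the terminal vertex of a full evolution. So it suffices to realize every $B \in \mathcal{O}_A$ as the terminal vertex of an evolution in $\mathcal{O}_A$ starting at a primitive vertex of $\mathcal{O}_A$.

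First I would unwind the definition of the clade. A vertex $B$ lies in $\mathcal{O}_A$ exactly when $B$ is a descendant of $A$ in $\mathcal{O}$, i.e., $A\leq B$, which means there is an evolution $A=A_0 \leftarrow A_1 \leftarrow \cdots \leftarrow A_k = B$ in~$\mathcal{O}$. The crucial observation is that this chain stays entirely within $\mathcal{O}_A$: each intermediate vertex satisfies $A=A_0\leq A_i$, since the initial segment $A_0 \leftarrow \cdots \leftarrow A_i$ is itself an evolution (this is just transitivity of $\leq$, Lemma~\ref{lepreorder}), so every $A_i$ is a descendant of~$A$ and hence a vertex of $\mathcal{O}_A$. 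Moreover each edge $A_{i+1}\to A_i$ is an edge of $\mathcal{O}$ between two vertices of $\mathcal{O}_A$, so by the very definition of the clade it is an edge of $\mathcal{O}_A$. Thus the displayed chain is a genuine evolution in~$\mathcal{O}_A$.

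To finish, I would invoke Lemma~\ref{thththt2++++rreee}, which identifies the primitive vertices of $\mathcal{O}_A$ as exactly the vertices isotypic to~$A$; in particular $A$ itself is primitive in $\mathcal{O}_A$. Consequently the chain $A=A_0 \leftarrow \cdots \leftarrow A_k = B$ is a full evolution for $B$ in $\mathcal{O}_A$, so $B$ is a descendant of a primitive vertex of $\mathcal{O}_A$ and therefore has finite height there.

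I do not expect a genuine obstacle in this argument; it is essentially a closure property of the vertex set $\{\,X : A\leq X\,\}$ under passage to intermediate vertices of evolutions, together with the preceding lemma identifying~$A$ as primitive in $\mathcal{O}_A$. The one point worth flagging explicitly is that the height of $B$ measured inside $\mathcal{O}_A$ need not coincide with its height in~$\mathcal{O}$ (the edges available in $\mathcal{O}_A$ are a sub-collection, and the primitives have changed); but only finiteness is asserted, and that is exactly what the constructed full evolution delivers.
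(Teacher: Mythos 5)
Your proof is correct and is essentially the paper's own argument: both use Lemma~\ref{thththt2++++rreee} to see that $A$ is primitive in $\mathcal{O}_A$ and then observe that any evolution in $\mathcal{O}$ from $A$ to a vertex $B$ of the clade stays inside $\mathcal{O}_A$ and is therefore a full evolution for $B$ there. You merely spell out the closure-under-intermediate-vertices step that the paper leaves implicit.
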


\begin{proof}  
Since $A$ is primitive in ${\mathcal O}_A$ and all vertices  of $ {\mathcal O}_A$ are terminal vertices  of evolutions starting at~$A$, all vertices  of $ {\mathcal O}_A$ have final height.
\end{proof}

The heights of vertices  in the quivers    $  {\mathcal O} $  and  $  {\mathcal O}_A$ will be  denoted respectively   by~$h$ and  $h_A $. The following theorem   estimates $h_A$ via $h$  for monotonous~$  {\mathcal O} $. 
 
 \begin{theor}\label{ththdbdnht2++++eee} If~$A$ is a vertex  of finite height in a    monotonous quiver~${\mathcal O}$, then  for all    $B\in {\mathcal O}_A$ we have $\infty > h(B) \geq h(A)$ and 
$h_A(B) \geq h(B)-h(A)  $.
\end{theor}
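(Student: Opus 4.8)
The plan is to prove the two assertions separately, disposing of the bounds on $h(B)$ first and then establishing the clade-height inequality by transporting an evolution from $\mathcal O_A$ into the ambient quiver. For the first assertion, I would start from the definition: every $B\in\mathcal O_A$ is a descendant of $A$, so $A\leq B$. Since $A$ has finite height and the property of having finite height is hereditary (Lemma~\ref{fifn}), it follows that $h(B)<\infty$. Monotonicity of $\mathcal O$ says precisely that all descendants of a vertex have height at least that of the vertex, so $A\leq B$ gives $h(B)\geq h(A)$. Together these yield $\infty>h(B)\geq h(A)$.

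For the inequality $h_A(B)\geq h(B)-h(A)$, the idea is to read a short full evolution for $B$ inside the clade as an evolution in $\mathcal O$ and then apply the step inequality. By Lemma~\ref{thththt2++++rree++e}, $B$ has finite height in $\mathcal O_A$, so I would pick a short full evolution
\[
C_0 \leftarrow C_1 \leftarrow \cdots \leftarrow C_{h_A(B)} = B
\]
for $B$ in $\mathcal O_A$. Its initial vertex $C_0$ is primitive in $\mathcal O_A$, hence isotypic to $A$ in $\mathcal O$ by Lemma~\ref{thththt2++++rreee}; since isotypic vertices of a monotonous quiver share a common height, $h(C_0)=h(A)$. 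The key observation is that every edge of $\mathcal O_A$ is an edge of $\mathcal O$, so the chain above is equally an evolution in $\mathcal O$, and the step inequality~\eqref{onestep} applies at each step, giving $h(C_{k+1})\leq h(C_k)+1$. Because $h(C_0)=h(A)<\infty$, this inequality propagates along the whole chain, and iterating from $C_0$ to $C_{h_A(B)}=B$ yields $h(B)\leq h(C_0)+h_A(B)=h(A)+h_A(B)$, which rearranges to the desired bound.

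I expect no serious obstacle here; the argument is short and the only points demanding care are bookkeeping ones. The crux is simply the transfer of the evolution from the subquiver $\mathcal O_A$ to $\mathcal O$: one must confirm that $C_0$ has finite \emph{ambient} height (which follows from $C_0\sim A$ and $h(A)<\infty$) so that the step inequality genuinely propagates, and one must correctly identify the primitive vertices of $\mathcal O_A$ via Lemma~\ref{thththt2++++rreee} to anchor the chain at height $h(A)$. Keeping the direction of the height inequalities straight—descendants never decrease height, and each single edge raises it by at most one—is the whole content of the computation.
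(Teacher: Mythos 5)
Your proposal is correct and follows essentially the same route as the paper: both anchor a short full evolution for $B$ in $\mathcal{O}_A$ at a vertex isotypic to $A$ (via Lemmas~\ref{thththt2++++rreee} and~\ref{thththt2++++rree++e}), note that monotonicity forces that vertex to have ambient height $h(A)$, and conclude $h(B)\leq h(A)+h_A(B)$. The only cosmetic difference is that the paper obtains this bound by concatenating a short full evolution for that initial vertex with the clade evolution and reading off the total length, whereas you propagate the step inequality~\eqref{onestep} edge by edge --- but since the step inequality is itself proved by concatenation, the two arguments coincide.
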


\begin{proof} The inequalities $\infty > h(B)  \geq h(A)$  follow  from the definitions of Section~\ref{sect22hhfull}  and  the monotonicity of~$\mathcal O$.    By Lemmas~\ref{thththt2++++rreee} and~\ref{thththt2++++rree++e}, there is a short full evolution,  $\beta$, in~${\mathcal O}_A$ starting at a vertex  $C\in {\mathcal O}$ isotypic to~$A$ and terminating at~$B$. We have $h(C)=h(A) <\infty$ so that there is a short full evolution, $\gamma$, for~$C$ in~$\mathcal O$. Then $\gamma \beta$ is a full evolution for~$B$ in~${\mathcal O}$ of length $  h(C)+h_A(B) =h(A)   +h_A(B) $.
 Therefore $  h(A)+h_A(B) \geq h(B)$  and  $ h_A(B) \geq h(B)-h(A)$.   \end{proof}

 \subsection{Regular vertices }  We   call  a vertex~$A$ of a quiver~$  {\mathcal O}$    \emph{regular} if   for any   $B  \in  {\mathcal O}_A$ with  $h(A)=h(B) $,   there is an edge  $ A\leftarrow   B$ in~${\mathcal O}$. This condition is a  very weak form of a composition law  in~$\mathcal O$:  it may be rephrased by saying that if there is a finite chain of edges $A \leftarrow \cdots  \leftarrow B$ in~${\mathcal O}$ and $h(A)=h(B) $,  then there is an edge   $A\leftarrow  B$  in~$\mathcal O$. The reader may check   that in    our examples of phylogenetic quivers  all  vertices  are regular.
 
 We now state the main result of this section. 

\begin{theor}\label{thththt2++++eee} The clade  of any regular vertex    of   a small  phylogenetic quiver   is   a small phylogenetic quiver.
\end{theor}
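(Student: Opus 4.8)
The plan is to verify directly the three defining properties of a phylogenetic quiver for $\mathcal{O}_A$: smallness, monotonicity, and phylogeneticity of every vertex. Smallness is immediate, since any evolution between two descendants of $A$ has all of its terms descendants of its own initial vertex, hence of $A$, and so lies entirely in $\mathcal{O}_A$; consequently the relations $\leq$ and $\sim$ on $\mathcal{O}_A$ are the restrictions of those on $\mathcal{O}$, and the isotypy classes of $\mathcal{O}_A$ form a subset of those of $\mathcal{O}$. By Lemmas~\ref{thththt2++++rreee} and~\ref{thththt2++++rree++e} the primitive vertices of $\mathcal{O}_A$ are exactly the vertices isotypic to $A$, and every vertex of $\mathcal{O}_A$ has finite height $h_A$. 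It therefore remains to produce, for each $B\in\mathcal{O}_A$, a universal evolution for $B$ in $\mathcal{O}_A$, and to check monotonicity.

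For the first task, write $a=h(A)$ and let $V_0\leftarrow\cdots\leftarrow V_{\ell}=B$ be the universal evolution of $B$ in $\mathcal{O}$, where $\ell=h(B)\geq a$ and $h(V_j)=j$ by Lemma~\ref{le2bbb}. Embedding this evolution into the concatenation of a short full evolution for $A$ with any evolution from $A$ to $B$ shows that the height-$a$ vertex $V_a$ is isotypic to a descendant of $A$, so $A\leq V_a$ and the tail $V_a\leftarrow\cdots\leftarrow V_{\ell}=B$ lies entirely in $\mathcal{O}_A$. I would take this tail as the backbone of a candidate universal evolution $\mu_B$ for $B$ in $\mathcal{O}_A$: if $V_a\sim A$ it is already a full evolution in $\mathcal{O}_A$, while if $V_a\not\sim A$ then $V_a$ is a descendant of $A$ with $h(V_a)=h(A)$, so regularity of $A$ furnishes a single edge $V_a\to A$ which I prepend to obtain $\mu_B=(A\leftarrow V_a\leftarrow\cdots\leftarrow B)$. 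This prepending is exactly where regularity enters. To see that $\mu_B$ is universal I would take an arbitrary full evolution $F=(F_0\leftarrow\cdots\leftarrow F_k=B)$ for $B$ in $\mathcal{O}_A$, prepend a short full $\mathcal{O}$-evolution for $F_0\sim A$ to turn it into a full $\mathcal{O}$-evolution for $B$, and invoke the universality of $V_0\leftarrow\cdots\leftarrow V_{\ell}$ in $\mathcal{O}$. Since isotypic vertices of the monotonous quiver $\mathcal{O}$ have equal height and the prepended part carries only heights $\leq a$, each $V_j$ with $j\geq a$ is isotypic to a term of $F$, these appearances occur in increasing order and end at $F_k=B$; when $V_a\not\sim A$ the extra initial vertex $A$ of $\mu_B$ is matched with $F_0$, and because $F_0\sim A\not\sim V_a$ the first backbone term lands at a strictly positive index. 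Thus $\mu_B$ embeds in every full evolution for $B$ in $\mathcal{O}_A$, so $\mu_B$ is universal and $B$ is phylogenetic in $\mathcal{O}_A$.

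Because universal evolutions are short, the length of $\mu_B$ equals $h_A(B)$, yielding the formula $h_A(B)=(h(B)-h(A))+\epsilon_B$ with $\epsilon_B\in\{0,1\}$, where $\epsilon_B=1$ precisely when $V_a\not\sim A$. Monotonicity of $\mathcal{O}_A$ then follows: for an edge $C\to D$ in $\mathcal{O}_A$ the step inequality and monotonicity of $\mathcal{O}$ give $h(C)-h(D)\in\{0,1\}$, so when this difference is $1$ the inequality $h_A(C)\geq h_A(D)$ is automatic, and when it is $0$ Lemma~\ref{aboutmonooo} together with Corollary~\ref{le777oldest} (applied to the phylogenetic vertices of Theorem~\ref{le2}) show that the universal evolutions of $C$ and $D$ agree up to isotypy below height $h(C)$, so their height-$a$ critical ancestors are isotypic and $\epsilon_C=\epsilon_D$. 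With smallness, monotonicity and phylogeneticity of all vertices established, $\mathcal{O}_A$ is a phylogenetic quiver; one may alternatively conclude through Theorem~\ref{le2++++dbnnnbnb}.

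The main obstacle is that $h_A(B)$ is genuinely not $h(B)-h(A)$: a descendant of $A$ may sit at the same $\mathcal{O}$-height as $A$ and yet lie strictly below it in $\mathcal{O}_A$, so the clade re-roots at $A$ and the bottom level must be organised by hand rather than read off from the heights in $\mathcal{O}$. Regularity is the hypothesis that tames this bottom level, guaranteeing that every vertex of $\mathcal{O}$-height $h(A)$ lying over $A$ is reachable from $A$ in a single step; this is what makes the candidate $\mu_B$ short, validates the height formula, and thereby delivers monotonicity. I expect the verification that $\mu_B$ embeds in an \emph{arbitrary} full evolution $F$ (not merely a short one) to be the most delicate point, since it relies on transporting the universality of $V_0\leftarrow\cdots\leftarrow V_\ell$ through the prepended $\mathcal{O}$-evolution while keeping the indices strictly increasing across the junction.
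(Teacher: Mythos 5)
Your argument follows essentially the same route as the paper's: your dichotomy $\epsilon_B\in\{0,1\}$, governed by whether the height-$h(A)$ term $V_a$ of the $\mathcal O$-universal evolution of $B$ is isotypic to $A$, is exactly the paper's dichotomy $p^{n-m}([B])=[A]$ versus $p^{n-m}([B])\neq[A]$ (Lemmas~\ref{ththnewlemma23+eee} and~\ref{ththnewlemma23+eeeonemore}); your $\mu_B$, obtained by prepending a regularity edge to the tail of the $\mathcal O$-universal evolution, is the paper's $\beta^m_+$; and the height formula $h_A(B)=h(B)-h(A)+\epsilon_B$ drives the monotonicity check in both write-ups. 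The only place where your text is not quite right is the monotonicity case $h(C)=h(D)=h(A)$. There the height-$h(A)$ terms of the universal evolutions of $C$ and $D$ are $C$ and $D$ themselves rather than critical ancestors, so Lemma~\ref{aboutmonooo} and Corollary~\ref{le777oldest} say nothing at that level, and the asserted equality $\epsilon_C=\epsilon_D$ can genuinely fail: take $D\sim A$ and $C\not\sim A$ with an edge $C\to D$, so that $\epsilon_D=0$ and $\epsilon_C=1$. The desired inequality $h_A(C)\geq h_A(D)$ still holds in that sub-case --- the only dangerous configuration is $\epsilon_C=0$, $\epsilon_D=1$, i.e.\ $C\sim A$ and $D\not\sim A$, which is impossible since $A\leq D\leq C\sim A$ forces $D\sim A$ --- but this requires the short separate argument that the paper supplies in its case $m=n=k$, not the isotypy of universal evolutions. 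With that one sub-case patched, your proof is complete and matches the paper's.
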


The proof of this theorem occupies the rest of the section.

\subsection{Lemmas}\label{Two lemmasdd} Consider  a vertex~$A$  of   a  small phylogenetic quiver~${\mathcal O}$ and a vertex  $B\in {\mathcal O}_A$. Set $m=h(A)\geq 0$ and $n=h(B) $. By Theorem~\ref{ththdbdnht2++++eee}, we have $n\geq m$ and $h_A(B)\geq n-m$. Recall the isotypy classes $[A]\in {\mathcal O}_m$, $[B]\in {\mathcal O}_n$, and the iterated parental map $p^{n-m}:
{\mathcal O}_n \to  {\mathcal O}_m$.

\begin{lemma}\label{ththnewlemma23+eee} If  $p^{n-m}([B])=[A]$, then~$B$ is a phylogenetic  vertex  of  ${\mathcal O}_A$ and $h_A(B) = n-m$.
\end{lemma}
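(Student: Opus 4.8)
The plan is to interpret the hypothesis concretely and then produce an explicit universal evolution for~$B$ inside the clade. First I would fix a universal evolution $\beta = (B_0 \leftarrow \cdots \leftarrow B_n)$ for~$B$ in~$\mathcal{O}$, which exists since~$\mathcal{O}$ is phylogenetic. Unwinding the definition of the parental map, the hypothesis $p^{n-m}([B])=[A]$ says precisely that $B_m \sim A$. Since $B_m$ is isotypic to~$A$, it is a descendant of~$A$ and hence lies in~$\mathcal{O}_A$, and by Lemma~\ref{thththt2++++rreee} it is a primitive vertex of~$\mathcal{O}_A$; moreover $B_{m+1},\ldots,B_n$, being descendants of~$B_m$, also lie in~$\mathcal{O}_A$. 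Thus the terminal segment $\beta' = (B_m \leftarrow \cdots \leftarrow B_n)$ is a full evolution for~$B$ in~$\mathcal{O}_A$ of length $n-m$, giving $h_A(B) \leq n-m$. Combined with the lower bound $h_A(B) \geq n-m$ from Theorem~\ref{ththdbdnht2++++eee}, this proves $h_A(B) = n-m$.

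It then remains to show that $\beta'$ is universal in~$\mathcal{O}_A$, which will give that~$B$ is phylogenetic there. I would take an arbitrary full evolution $\gamma = (C_0 \leftarrow \cdots \leftarrow C_l)$ for~$B$ in~$\mathcal{O}_A$; by Lemma~\ref{thththt2++++rreee} its initial vertex satisfies $C_0 \sim A$, so in~$\mathcal{O}$ we have $h(C_0) = h(A) = m$ by monotonicity. Prepending to~$\gamma$ a short full evolution $\delta = (D_0 \leftarrow \cdots \leftarrow D_m = C_0)$ for~$C_0$ in~$\mathcal{O}$ produces a full evolution $\delta\gamma$ for~$B$ in~$\mathcal{O}$. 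Since~$\beta$ is universal in~$\mathcal{O}$, it embeds in~$\delta\gamma$.

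The technical core is then a height bookkeeping argument: by Lemma~\ref{le2bbb} the vertices $D_0,\ldots,D_{m-1}$ have heights $0,\ldots,m-1$, while every~$C_i$ has height $\geq m$ by monotonicity, so the vertices of~$\delta\gamma$ of height $\geq m$ are exactly those occupying the positions of~$\gamma$. As each~$B_k$ with $k \geq m$ has height~$k \geq m$, it must be matched, under the embedding, to a vertex sitting in the~$\gamma$ part; reading off these matches yields strictly increasing indices that select an embedding of~$\beta'$ into~$\gamma$.

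The subtle point — the one I expect to be the main obstacle — is that the embedding so obtained is furnished with respect to isotypy in the ambient quiver~$\mathcal{O}$, whereas universality of~$\beta'$ in~$\mathcal{O}_A$ requires isotypy computed in~$\mathcal{O}_A$. I would resolve this by observing that~$\mathcal{O}_A$ is closed under passing to descendants: any evolution in~$\mathcal{O}$ issuing from a vertex $X \in \mathcal{O}_A$ has all of its vertices descended from~$A$ and so stays in~$\mathcal{O}_A$. Consequently, for vertices lying in~$\mathcal{O}_A$ the ancestry relation, and hence the relation~$\sim$, coincide whether computed in~$\mathcal{O}$ or in~$\mathcal{O}_A$. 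This upgrades the matches $B_k \sim C_{t_k}$ to isotypies in~$\mathcal{O}_A$, so that~$\beta'$ genuinely embeds in~$\gamma$ inside the clade. As~$\gamma$ was an arbitrary full evolution for~$B$ in~$\mathcal{O}_A$, the evolution~$\beta'$ is universal for~$B$ in~$\mathcal{O}_A$, whence~$B$ is phylogenetic in~$\mathcal{O}_A$.
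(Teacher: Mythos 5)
Your proof is correct and follows essentially the same route as the paper: fix a universal evolution $\beta$ for $B$ in $\mathcal{O}$, observe that $p^{n-m}([B])=[A]$ forces $B_m\sim A$, take the terminal segment as the candidate universal evolution in $\mathcal{O}_A$, and verify universality by prepending a full evolution for the initial vertex of an arbitrary clade evolution and comparing heights. You merely spell out two details the paper compresses into one sentence — that full evolutions in $\mathcal{O}_A$ may start at a vertex only isotypic to $A$, and that isotypy computed in $\mathcal{O}$ and in $\mathcal{O}_A$ agree on the clade — and both of these are handled correctly.
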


\begin{proof} Suppose first that $n=m$. Then the condition $p^{n-m}([B])=[A]$ means that $B\sim A$. Then~$B$ is primitive in ${\mathcal O}_A$ by Lemma~\ref{thththt2++++rreee}. Therefore $h_A(B) = 0= n-m$ and  $B$ is phylogenetic  in ${\mathcal O}_A$. Suppose now that $n>m$. Pick  universal evolutions~$\alpha$ for~$A$ and  $\beta=(B_0 \leftarrow \cdots  \leftarrow  B_n=B)$ for~$B $ in~$\mathcal O$.
The condition $p^{n-m}([B])=[A]$ implies that $B_m \sim A$. Then   the terminal segment $\beta^m=( B_m \leftarrow \cdots  \leftarrow  B_n  )$ of~$\beta$ is a full evolution in ${\mathcal O}_A$ for~$B$ of length $n-m$.    For any evolution~$\gamma$ from~$A$ to~$B$   in~$\mathcal O$,  the  universal evolution~$\beta $  must embed  in $ \alpha \gamma$. Comparing the heights of vertices   in~$\mathcal O$, we observe that such an embedding carries $\beta^m$   to~$\gamma$ and yields an embedding of $\beta^m$   in~$\gamma$. Thus, $\beta^m$ is a universal evolution for~$B$ in $ {\mathcal O}_A$. So, $B$  is a phylogenetic  vertex  of ${\mathcal O}_A$  and $h_A(B) = n-m $. \end{proof}

\begin{lemma}\label{ththnewlemma23+eeeonemore} If   $p^{n-m}( [B]) \neq [A]$ and $A$ is regular, then~$B$ is a phylogenetic  vertex  of ${\mathcal O}_A$ and $h_A(B) = n-m+1$.
\end{lemma}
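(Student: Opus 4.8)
The plan is to promote the universal evolution of $B$ in $\mathcal{O}$ to one inside the clade $\mathcal{O}_A$, paying a single extra step supplied by the regularity of $A$. Fix a universal evolution $\beta = (B_0 \leftarrow \cdots \leftarrow B_n = B)$ for $B$ in $\mathcal{O}$, so that $h(B_k)=k$ for all $k$ by Lemma~\ref{le2bbb} and $[B_m] = p^{n-m}([B])$. The hypothesis $p^{n-m}([B]) \neq [A]$ says precisely that the critical ancestor $B_m$ of $B$ at height $m$ is not isotypic to $A$. I would take the terminal segment $\beta^m = (B_m \leftarrow \cdots \leftarrow B_n)$, prepend one edge running from $A$ up to $B_m$, and argue that the resulting chain of length $n-m+1$ is a universal evolution for $B$ in $\mathcal{O}_A$.

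The first key step is to establish $A \leq B_m$, so that $B_m$ lies in $\mathcal{O}_A$ and the extra edge can be produced. Since $A \leq B$, choose an evolution $\gamma$ from $A$ to $B$ and prepend a short full evolution $\alpha$ for $A$, obtaining a full evolution $\alpha\gamma$ for $B$ in $\mathcal{O}$. As $\beta$ is universal it embeds in $\alpha\gamma$, and by monotonicity this embedding preserves heights up to isotypy, so the image of $B_m$ has height $m$. The only height-$m$ vertex of $\alpha$ is its terminal vertex $A$; because $B_m \not\sim A$, the image of $B_m$ must therefore be a height-$m$ vertex of $\gamma$ that is not isotypic to its starting vertex $A$. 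Every vertex of $\gamma$ is a descendant of $A$, so that vertex, hence $B_m$, satisfies $A \leq B_m$. Now $h(A) = m = h(B_m)$ and $B_m \in \mathcal{O}_A$, so regularity of $A$ furnishes an edge $B_m \to A$ in $\mathcal{O}$. Prepending it to $\beta^m$ yields a full evolution $\delta = (A \leftarrow B_m \leftarrow \cdots \leftarrow B_n = B)$ for $B$ in $\mathcal{O}_A$ of length $n-m+1$; here I use that $A \leq B_m \leq \cdots \leq B_n$ places every vertex, and hence every edge, of $\delta$ inside $\mathcal{O}_A$.

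It remains to prove that $\delta$ is universal in $\mathcal{O}_A$; this will simultaneously yield $h_A(B) = n-m+1$ and the phylogeneticity of $B$ in $\mathcal{O}_A$. I would take an arbitrary full evolution $\epsilon = (D_0 \leftarrow \cdots \leftarrow D_l = B)$ for $B$ in $\mathcal{O}_A$, whose initial vertex satisfies $D_0 \sim A$ by Lemma~\ref{thththt2++++rreee}. Prepending a short full evolution for $D_0$ in $\mathcal{O}$ produces a full evolution for $B$ in $\mathcal{O}$ into which $\beta$ embeds. Repeating the height bookkeeping of the previous step, the images of $B_m, B_{m+1}, \ldots, B_n$ all fall inside the $\epsilon$-part of this concatenation and at strictly positive positions, again because $B_m \not\sim A \sim D_0$; they thus pick out vertices $D_{i_m} \sim B_m, \ldots, D_{i_n} = B$ with $1 \leq i_m < \cdots < i_n = l$. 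Matching $A$ with $D_0$ and $B_k$ with $D_{i_k}$ then exhibits $\delta$ as embedding in $\epsilon$, proving universality.

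The main obstacle is the comparison $A \leq B_m$: a priori $A$ and $B_m$ are merely two ancestors of $B$ of the same height $m$, which need not be comparable in a monotonous quiver. The universality of $\beta$ together with the hypothesis $B_m \not\sim A$ is exactly what forces the image of $B_m$ off the prepended evolution and into a chain issued from $A$, and the regularity hypothesis is what converts the resulting relation $A \leq B_m$ into the single extra edge responsible for the \emph{$+1$} in $h_A(B) = n-m+1$, distinguishing this case from Lemma~\ref{ththnewlemma23+eee}.
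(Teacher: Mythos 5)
Your proposal is correct and follows essentially the same route as the paper: use the universality of $\beta$ together with the height bookkeeping to locate $B_m$ as a non-initial vertex of an evolution issued from $A$, invoke regularity to supply the edge $B_m\to A$, and show that the resulting chain of length $n-m+1$ is universal in ${\mathcal O}_A$. The only differences are presentational (the paper splits off the case $n=m$, and you check embeddability into full evolutions of ${\mathcal O}_A$ starting at any vertex isotypic to $A$ rather than at $A$ itself, which is if anything slightly more careful).
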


\begin{proof} Suppose first that $n=m$. Then the condition $p^{n-m}([B]) \neq  [A]$ means that~$  A$ and $B$ are not isotypic in~${\mathcal O}$.   By Lemma~\ref{thththt2++++rreee},  $B$ is not a primitive vertex  of ${\mathcal O}_A$ and so $h_A(B) \geq 1$.    Since $B\in {\mathcal O}_A$, there is an evolution from~$A$ to~$B$ in~$\mathcal O$ of non-zero  length. The regularity of $A$ and the assumption  $h(B)=n=m=h(A)$  imply that there is an edge  $ A \leftarrow  B$ in~${\mathcal O}$. This edge  yields a universal evolution for~$B$ in ${\mathcal O}_A$ of  length~1. Thus,~$B$ is a phylogenetic vertex  of ${\mathcal O}_A$ and  $h_A(B)  = 1=n-m+1$. 

 Suppose now that $n>m$. Pick a universal evolution $\beta=(B_0 \leftarrow \cdots  \leftarrow  B_n)$ for $B=B_n$ in~${\mathcal O}$ and an arbitrary   evolution $\gamma=(A_0 \leftarrow   \cdots  \leftarrow A_k)$   from $A=A_0$ to $B=A_k$  in~${\mathcal O}$ (for some $k\geq 0$). 
      The same argument as in the proof of the previous lemma shows that the terminal segment $\beta^m=(B_m \leftarrow \cdots  \leftarrow  B_n)$ of~$\beta$ embeds in~$\gamma$. In particular,  $B_m \sim A_i$ for some~$i $. So $A\leq A_i \leq B_m$ and   $B_m\in {\mathcal O}_A$. The condition $p^{n-m}([B]) \neq [A]$ implies that $B_m$ and  $  A=A_0$  are not isotypic, i.e., $i \neq 0$. Therefore there is an evolution from $A=A_0$ to $B_m\sim A_i$ in~$\mathcal O$ of  length $\geq i \neq 0$.  The regularity of $A$ and the equalities $h(B_m)=m=h(A)$  imply that there is an edge  $A \leftarrow  B_m $ in~${\mathcal O}$. Concatenating this edge  with   $\beta^m $, we obtain an evolution $\beta^m_+ $ from~$A$ to $B=B_n$ of
length $n-m+1$. By the above,  $\beta^m_+ $ embeds in~$\gamma$. Therefore, $\beta^m_+ $ is a universal evolution for $B\in {\mathcal O}_A$, $B$ is phylogenetic in ${\mathcal O}_A$, and   $h_A(B) = n-m+1$.  \end{proof}

   %Since~$\beta$ is universal, it embeds in $\alpha \gamma$.
%Comparing the heights, we deduce that 

%%Consequently, $h_A(B) \leq n-m+1$. At the same time, if there is a length $n-m$ evolution $ A_0 \leftarrow   \cdots  \leftarrow A_{n-m} $    from $A=A_0$ to $B=A_{n-m}$, then using the same universality argument as above we observe that $B_{m+i} \sim A_i$ for all $i=0,..., m-n$. This contradicts the fact   that~$  A$ and $B_m$ are not isotypic. Thus,

\subsection{Proof of Theorem~\ref{thththt2++++eee}}\label{Two lemmasddvnvn} Let $A$ be a regular vertex    of   a  phylogenetic quiver~${\mathcal O}$. Since~$\mathcal O$ is small, so is the clade ${\mathcal O}_A$ of~$A$. In view of Lemmas~\ref{ththnewlemma23+eee} and~\ref{ththnewlemma23+eeeonemore}, we need only to prove that ${\mathcal O}_A$ is monotonous, i.e.,    that $h_A(B)\leq h_A(C)$ for any $B, C\in  {\mathcal O}_A$ such that there is an edge  $B \leftarrow C$ in~${\mathcal O}$. Set $m=h(A), n= h(B)$ and $k=h(C)$. By the monotonicity of~${\mathcal O}$ and the definition of the height,    $$m\leq n\leq k\leq n +1.$$ We distinguish three cases.

Case $m=n=k$. If $A\sim B$, then $h_A(B)=0 \leq h_A(C)$.  If $A \sim C$, then $A \leq B \leq C \leq A$ so that $A \sim B$ and we proceed as above. If neither~$B$ nor~$C$ are isotypic to $A$, then Lemma~\ref{ththnewlemma23+eeeonemore} applies to $B,C$ and gives $h_A(B)=1= h_A(C)$.

Case $m< n=k$. Since there is an edge  $B \leftarrow C$ in~$\mathcal O$ and $h(B)=n=k=h(C)$, Theorem~\ref{aboutpreceq}(iii) implies that $p([B])=p([C])$. Therefore $p^{n-m}([B])=p^{n-m}([C])$.  Then either $p^{n-m}([B])=[A]$ and Lemma~\ref{ththnewlemma23+eee}   gives
$$h_A(B)= n-m=k-m=h_A(C)$$
or $p^{n-m}([B])\neq [A]$ and  Lemma~\ref{ththnewlemma23+eeeonemore} gives
$$h_A(B)= n-m+1= k-m+1=h_A(C). $$

Case $  k=n+1$. Concatenating a  universal evolution for~$B$ in~$\mathcal O$ with the edge  $B \leftarrow C$ we obtain  
a  short full evolution for~$C$ in~$\mathcal O$. The latter evolution is universal  and so $p([C])= [B] $. Therefore $p^{n-m}([B])=p^{n-m+1}([C])$.  If $p^{n-m}([B])=[A]$, then    Lemma~\ref{ththnewlemma23+eee}   gives  
$$h_A(B)= n-m \,\, \,  {\text {and}}\,\, \, h_A(C)=k-m. $$
If $p^{n-m}([B])\neq [A]$, then  Lemma~\ref{ththnewlemma23+eeeonemore}  gives
$$h_A(B)= n-m+1 \,\, \,  {\text {and}}\,\, \, h_A(C)=k-m+1. $$
In both cases, $h_A(B) <  h_A(C)$.

 \section{The quiver  of finite ultrametric spaces}\label{sect35df}

We  form a phylogenetic quiver  from   finite ultrametric spaces. We first recall  the definition  of   an ultrametric space.

\subsection{Ultrametrics and contractions}\label{sect30}\label{Contractions--}\label{Contractions}  A   \emph{metric space}\index{space!metric} is  a pair $(X,d)$ consisting of  a non-empty set~$X$ and a map $d:X \times X \to \RR_+=[0, \infty) $,  the \emph{metric}, such that for all $x,y,z\in X$, we have $d(x,y)=d(y,x) $, $d(x,y)= 0 \Longleftrightarrow x=y$, and
\begin{equation}\label{trian}d(x,y) \leq  d(x,z) + d(y,z).\end{equation}
      An isometry between metric spaces is a metric-preserving   bijection. A metric space $(X,d)$ is    \emph{finite} if~$X$ is a finite set.

 An   \emph{ultrametric space}  is a metric\index{space!ultrametric} space $(X,d)$    such that for all $x,y,z\in X$,
\begin{equation}\label{ult} d(x,y) \leq \text{max}(d(x,z), d(y,z)).  \end{equation}
   The map~$d$ is called then an   \emph{ultrametric}.
 The condition \eqref{ult} is stronger than  \eqref{trian}; it implies that for any   $x,y,z\in X$, two of the numbers $d(x,y)$, $ d(x,z)$, $d(y,z)$ are equal   and are greater than or equal to the third number. %The definitions given in Section~\ref{sect30} apply in particular to ultrametric spaces.

  For a real number $\epsilon> 0$, we call  a map $f:X\to Y$ between   metric spaces $X=(X,d)$ and $Y=(Y,\rho)$  an \emph{$\epsilon $-contraction}
if    $f(X)=Y$  and
\begin{equation}\label{contt} \rho(f(x),f(y))  =d(x,y)- \varepsilon   \end{equation}
for any distinct $x,y \in X$. Various $\epsilon $-contractions with   $  \epsilon \in (0,\infty) $ are collectively called \emph{contractions}.
Contractions are surjective but not necessarily bijective. The composition of  two contractions is not necessarily a contraction.

\begin{theor}\label{th111} Let $\mathcal{U}$ be the quiver  whose vertices  are finite   ultrametric spaces and whose edges are
  contractions and isometries.
 Then:

 (i) Two finite ultrametric spaces are isotypic  to each other in~$\mathcal{U}$   if and only if they are isometric;

 (ii) A finite ultrametric space   is a primitive vertex   of~$\mathcal{U}$   if and only if it consists of a single point;

 (iii) The height of a finite  ultrametric space $X=(X,d)$ in~$\mathcal U$ is equal to the number of non-zero elements in the set $d(X\times X) \subset \RR_+$;

  (iv) The quiver~$\mathcal{U}$ is    phylogenetic;

  (v)  All vertices  of~$\mathcal{U}$ are regular.
  \end{theor}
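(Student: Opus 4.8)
The plan is to reduce all five claims to a single structural description of the composite maps that arise along evolutions in $\mathcal{U}$. First I would record the elementary fact, following from the ultrametric inequality \eqref{ult}, that for any $\varepsilon\geq 0$ the relation ``$d(x,y)\leq\varepsilon$'' on a finite ultrametric space $(X,d)$ is an equivalence relation, and that setting $\rho([x],[y])=d(x,y)-\varepsilon$ on distinct classes gives a well-defined ultrametric on $X/{\sim_\varepsilon}$. An $\varepsilon$-contraction of $X$ exists exactly when $X$ has no distance in the open interval $(0,\varepsilon)$, and it then realizes this quotient. The crux is the structural lemma: if $\phi\colon X\to A$ is a composite of contractions and isometries coming from an evolution from $A$ to $X$, with total contraction parameter $\Delta\geq 0$ (the sum of the individual $\varepsilon$'s), then $\phi(x)=\phi(y)$ if and only if $d(x,y)\leq\Delta$, while $d_A(\phi(x),\phi(y))=d(x,y)-\Delta$ otherwise; hence $A$ is isometric to $X/{\sim_\Delta}$ with the metric $d-\Delta$. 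The delicate point is that the validity of each intermediate contraction forbids negative distances, and this is precisely what rigidifies the identification pattern into the clean threshold description.

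Granting this lemma, claims (i)--(iii) follow quickly. Since contractions and isometries are surjective, $A\leq B$ forces $\card(A)\leq\card(B)$, so $A\sim B$ yields equal cardinalities; then both threshold quotients identify nothing and $A,B$ differ only by uniform shifts $\Delta,\Delta'\geq 0$ of all distances. Summing all pairwise distances over $A$ and over $B$ forces $\Delta=\Delta'=0$, proving (i); the converse is immediate from the isometry edges. For (ii), any ancestor of a one-point space is a surjective image of it, hence one-point, so a singleton is primitive, whereas a space with $\geq 2$ points contracts, by its smallest positive distance, onto a strictly smaller non-isometric space and is therefore not primitive. For (iii), each edge changes the number of nonzero distance values by at most one (an isometry by zero; an $\varepsilon$-contraction by one exactly when $\varepsilon$ equals the minimal distance and by zero otherwise), giving $h(X)\geq r$ for $r=\card\big(d(X\times X)\setminus\{0\}\big)$; repeatedly contracting by the current minimal distance produces a full evolution of length $r$, giving $h(X)\leq r$.

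For (iv), smallness holds because isometry classes of finite ultrametric spaces form a set, and monotonicity follows from (iii) since no edge increases the number of nonzero distances. By Theorem~\ref{le2++++dbnnnbnb} it then remains to show every $X\in\mathcal{U}$ is normal. Writing the nonzero distances as $0<d_1<\cdots<d_r$, the structural lemma identifies every ancestor of $X$ with some $X/{\sim_\Delta}$, of height $\#\{i:d_i>\Delta\}$, so a height-$k$ ancestor has $\Delta\in[d_{r-k},d_{r-k+1})$. The key observation is that criticality pins $\Delta$ down: for a height-$k$ ancestor $A$ to be critical, the edge from the adjacent height-$(k+1)$ vertex $A_1=X/{\sim_{\Delta'}}$ down to $A$ must be a genuine contraction of parameter $\Delta-\Delta'$, which requires $\Delta-\Delta'\leq d_{r-k}-\Delta'$, i.e. $\Delta\leq d_{r-k}$; together with $\Delta\geq d_{r-k}$ this forces $\Delta=d_{r-k}$. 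Hence a critical ancestor of height $k$ is isometric to $X/{\sim_{d_{r-k}}}$, determined up to isometry by $X$ and $k$, so any two critical ancestors of equal height are isotypic and $X$ is normal. I would stress that normality is exactly the subtle part: non-critical ancestors of a fixed height need not be isometric (a two-point space at distance $2$ has two-point ancestors at every distance in $(0,2]$), so restricting to critical ancestors is essential.

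Finally, (v) is a direct application of the structural lemma: if $B\in\mathcal{U}_A$ with $h(A)=h(B)$, then $A\cong B/{\sim_\Delta}$ with $\Delta$ strictly below the minimal nonzero distance of $B$ (otherwise the height would drop), so $\sim_\Delta$ identifies nothing and the map $B\to A$ shrinking every distance by $\Delta$ is itself a single contraction, or an isometry when $\Delta=0$, giving the required edge $B\to A$. The main obstacle throughout is the structural lemma together with its consequence for critical ancestors; once the threshold description is in hand each claim is a short computation, but establishing that description correctly---in particular the nonnegativity constraint that rigidifies the identifications---is where the real work lies.
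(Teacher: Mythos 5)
Your argument is correct, but it takes a genuinely different route from the paper's at the decisive step. The paper proves phylogeneticity of a vertex $X$ by exhibiting the canonical evolution \eqref{evol47} (quotient by the minimal nonzero distance, iterate) and verifying \emph{directly} that it is universal: an arbitrary full evolution for $X$ is first cleaned of isometries and of bijective contractions, and Lemma~\ref{contr} then forces every remaining edge to be the canonical $\vert X\vert$-contraction, so the cleaned evolution is isotypic to \eqref{evol47}. You instead prove a stronger structural statement --- every ancestor of $X$ reached by an evolution of total contraction parameter $\Delta$ is the threshold quotient $X/{\sim_\Delta}$ with metric $d-\Delta$ --- and deduce phylogeneticity by checking \emph{normality} (writing $0<d_1<\cdots<d_r$ for the nonzero distances of $X$, the nonnegativity constraint pins a critical ancestor of height $k$ down to $X/{\sim_{d_{r-k}}}$, so critical ancestors of equal height are isotypic) and then invoking Theorem~\ref{le2++++dbnnnbnb}; the paper's Section~\ref{sect35df} never appeals to normality. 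Your route buys three things: claim (v), which the paper leaves to the reader, drops out of the classification in two lines; the lower bound $h(X)\geq N(X)$ in (iii) comes directly from the observation that each edge removes at most one nonzero distance value, independently of any universality statement; and your remark that non-critical ancestors of a fixed height need not be isometric correctly isolates why normality, rather than uniqueness of ancestors, is the relevant condition. What the paper's route buys is self-containedness --- it bypasses the normality machinery of Theorems~\ref{BEST+} and~\ref{le2++++dbnnnbnb} --- and note that a full write-up of your structural lemma would still require essentially the paper's induction (additivity of the parameters along a composite, plus the fact that an $\varepsilon$-contraction can only exist when no nonzero distance lies below $\varepsilon$), so the two proofs share their computational core; they also share the $\vert\vert\cdot\vert\vert$-summation for (i) and the construction of the quotient $u(X)$.
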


  The proof of Theorem~\ref{th111} occupies the rest of the  section.
We begin with notation and a lemma.   For a  finite metric space $ X=(X,d)$,   set
   $$ \vert \vert X\vert \vert =\sum_{x,y\in X} d(x,y)=   \sum_{x,y\in X, x \neq y} d(x,y).$$ 
   If $\card(X)\geq 2$,  then   set
  $$ \vert X\vert = \text{min} \{ d(x,y) \, \vert \, x,y \in X, x\neq y\} .  $$
   
    \begin{lemma}\label{contr} Let    $f: X\to Y$   be an   $\epsilon $-contraction between finite metric spaces $X=(X,d)$ and $Y=(Y,\rho)$ where $\varepsilon >0$. If   $\card(Y)\geq 2$, then $\vert \vert Y\vert \vert <  \vert \vert X\vert \vert$. If~$f$ is not a bijection, then $\card(X)\geq 2$ and   $ \vert X\vert = \varepsilon $.
\end{lemma}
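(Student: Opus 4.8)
The plan is to prove the two assertions separately, each by a direct computation exploiting the defining identity \eqref{contt} of an $\epsilon$-contraction. Both claims are essentially bookkeeping on the sum $\vert\vert X\vert\vert$ and the minimum $\vert X\vert$, so I expect no deep obstacle; the only subtlety is keeping careful track of which pairs of points survive as distinct pairs after applying $f$.

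\textbf{First claim.} Assume $\card(Y)\geq 2$ and compute $\vert\vert Y\vert\vert$ by pulling back along $f$. Since $f$ is an $\epsilon$-contraction it is surjective, so every pair of distinct points of $Y$ is the image of at least one pair of distinct points of $X$. The idea is to group the unordered pairs $\{x,x'\}$ of distinct points of $X$ into two types: those with $f(x)\neq f(x')$, which contribute to distances in $Y$, and those with $f(x)=f(x')$, which are collapsed. For a pair with $f(x)\neq f(x')$ we have $\rho(f(x),f(x'))=d(x,x')-\varepsilon<d(x,x')$ by \eqref{contt}. I would like to say $\vert\vert Y\vert\vert=\sum_{f(x)\neq f(x')}\rho(f(x),f(x'))$, but this over- or under-counts because distinct pairs in $X$ may map to the same pair in $Y$. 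The clean route is to note that since $\card(Y)\ge 2$ there is at least one pair of distinct points in $X$ whose images are distinct, hence at least one strictly-decreased term, while \eqref{contt} guarantees every surviving distance strictly decreases and the collapsed pairs only remove positive contributions. More precisely, fixing for each distinct pair $\{u,v\}$ in $Y$ one preimage pair and summing $\rho(u,v)=d(\cdot,\cdot)-\varepsilon$ over these representatives already gives a quantity strictly below $\vert\vert X\vert\vert$ (each term drops by $\varepsilon>0$ and the remaining pairs of $X$ contribute nonnegatively to $\vert\vert X\vert\vert$), which yields $\vert\vert Y\vert\vert<\vert\vert X\vert\vert$. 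The main care is to organize this so that no distance in $Y$ is omitted and the inequality is genuinely strict, using $\card(Y)\ge 2$ to guarantee at least one surviving positive distance.

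\textbf{Second claim.} Suppose $f$ is not a bijection. Surjectivity plus non-injectivity forces two distinct points $x\neq x'$ of $X$ with $f(x)=f(x')$; in particular $\card(X)\geq 2$, so $\vert X\vert$ is defined. Applying \eqref{contt} to this collapsed pair gives $0=\rho(f(x),f(x'))=d(x,x')-\varepsilon$, whence $d(x,x')=\varepsilon$ and therefore $\vert X\vert\leq\varepsilon$. For the reverse inequality I would argue that no pair of distinct points can be closer than $\varepsilon$: if $d(y,y')<\varepsilon$ for distinct $y,y'$, then \eqref{contt} would force $\rho(f(y),f(y'))=d(y,y')-\varepsilon<0$ (when $f(y)\neq f(y')$), contradicting nonnegativity of $\rho$, while $f(y)=f(y')$ would force $d(y,y')=\varepsilon$, again a contradiction. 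Hence every distinct pair has distance $\geq\varepsilon$, so $\vert X\vert\geq\varepsilon$, and combining the two bounds gives $\vert X\vert=\varepsilon$. This completes the lemma; the whole argument is elementary, and the only place demanding attention is the strict-inequality counting in the first claim.
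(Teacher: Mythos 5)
Your proof is correct and follows essentially the same route as the paper: for the first claim the paper chooses one preimage point per element of $Y$ (a section of $f$) and compares the resulting representative sum with $\vert\vert X\vert\vert$, which is only a cosmetic variant of your choice of one preimage pair per pair; the second claim is argued identically via the collapsed pair giving $d=\varepsilon$ and \eqref{contt} forcing all distances to be at least $\varepsilon$.
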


\begin{proof}  Since $f(X)=Y$, we can pick for each $a\in Y$   a point $\overline a \in f^{-1}(a) $. Then
$$\vert \vert Y\vert \vert=\sum_{a,b\in Y, a \neq b } \rho (a,b) =\sum_{a,b\in Y, a\neq b} (d(\overline a , \overline b)-\varepsilon ) < \sum_{a,b\in Y, a\neq b } d(\overline a , \overline b) \leq  \vert \vert X\vert \vert.$$  

  If   $f$ is non-injective,  then $\card(X) \geq \card(Y)+ 1  \geq 2$ and $f(a)=f(b)$ for some  distinct $a,b\in X$.  Formula~\eqref{contt} implies that $d(a,b)=\varepsilon$  and   $d(x,y) \geq \varepsilon$ for any distinct $x,y\in X$. Hence,    
$ \vert X\vert = \varepsilon $. \end{proof}

 \subsection{Proof of Theorem~\ref{th111}}\label{sectnumerinvsfgh}   Since isometries   are   edges in~$\mathcal U$ and each edge  determines a length 1 evolution,  isometric   ultrametric spaces  are isotypic in~$\mathcal{U}$.  Conversely,  consider isotypic  $X,Y \in \mathcal{U}$. If $\card(X)=1$, then the condition $Y \leq X$ implies that $\card(Y)=1$.  If $\card(Y)=1$, then the condition $X \leq Y$ implies that $\card(X)=1$. In both cases, $X$ is isometric to~$Y$. Suppose that $\card(X)\geq 2$ and $\card(Y)\geq 2$. Since $Y\leq X$, there is an evolution $Y\leftarrow \cdots \leftarrow X$  in~$\mathcal{U}$. By Lemma~\ref{contr}, either      all edges in this evolution are isometries or $\vert \vert Y\vert \vert < \vert \vert X\vert \vert$. Similarly,   the relation $X\leq Y$  implies that either $X$,~$Y$ are  isometric or~$\vert \vert X\vert \vert < \vert \vert Y\vert \vert$. Since we cannot   have  $\vert \vert Y\vert \vert < \vert \vert X\vert \vert < \vert \vert Y\vert \vert$, the only   option is that~$X$  and~$Y$ are isometric. This implies  (i).

  To proceed, we define certain edges in~$\mathcal U$. Given $   \epsilon \in \RR  $ and a finite ultrametric space $ X=(X,d)$ having  at least two points,     the formula
 \begin{equation*}
    d_\varepsilon (x,y)=\left\{
                \begin{array}{ll}
                0 \quad {\text {if}} \,\,  x=y , \\
                  d(x,y)-\varepsilon \quad {\text {if}} \,\,   x \neq y   
                \end{array}
              \right.
\end{equation*} defines a map $d_\varepsilon:X\times X \to \RR$. If $  \epsilon < \vert X\vert$, then $d_\varepsilon$ is an ultrametric  in $X$ and the identity map $\id_X:(X,d)\to (X, d_\varepsilon)$ is   a  bijective $\varepsilon$-contraction. For   $  \epsilon = \vert X\vert$, the   map $d^\bullet=d_{\vert X\vert}:X\times X \to \RR$
satisfies all requirements   on an ultrametric   except one:
there are distinct   $x,y\in X$ with $d^\bullet (x,y)=0$.  We define a relation $\sim_{d^\bullet}$ in~$X$   by $x_1\sim_{d^\bullet} x_2$ if $d^\bullet (x_1,x_2)=0$. It is straightforward to check that $\sim_{d^\bullet}$ is an equivalence relation. Let $Y=X/{\sim_{d^\bullet}}$ be the  quotient set and   let $p:X\to Y$ be the projection. Then there is a unique   map $\rho:Y\times Y\to \RR $ such that $ {d^\bullet}(x,y)=\rho(p(x), p(y))$ for all $x,y\in X$. The map~$\rho$  is an ultrametric. We   denote the ultrametric space  $(Y,\rho)$ by   $u(X)$. Clearly, the projection $p:X\to u(X)$ is a non-injective  $\vert X\vert$-contraction. Applying this  construction recursively, we obtain an   evolution  in~$\mathcal U$
 \begin{equation}\label{evol47}  u^m(X) \leftarrow   \cdots \leftarrow u^2(X) \leftarrow u(X) \leftarrow X \end{equation}
 where~$m $ is the smallest integer such that $u^m(X)$ has only one point. Thus,   $ X  $ has a 1-point  ancestor.
We can now prove Claim (ii) of the theorem. By the definition of~$\mathcal U$, the only edges from a 1-point ultrametric space~$A$ to    vertices  of~$\mathcal U$ are isometries.  Thus, all ancestors of~$A$ are isometric to~$A$ and~$A   $ is primitive. If $X \in  \mathcal{U} $ is   primitive, then~$X$ is  isotypic to all its ancestors and, in particular, is isotypic to a 1-point   space. By (i),  $X$ is   a 1-point space. This proves (ii).

  We verify now that every  vertex  $X\in \mathcal{U}$
  is phylogenetic. Let~$\alpha$ be the full evolution \eqref{evol47} for~$X$. We claim that~$\alpha$   is universal.
  We must show that~$\alpha$ embeds in  an arbitrary full evolution  for~$X$, say,
  $$ \beta=( B_0  \stackrel{f_1} {\longleftarrow}  B_1  \stackrel{f_2} {\longleftarrow} \cdots  \stackrel{f_{n-1}} {\longleftarrow} B_{n-1}  \stackrel{f_n} {\longleftarrow} B_n=X) $$
  where $B_0$ is a 1-point space.
  Note   that if for some $k= 1,..., n-1$, the edge  $f_k:B_{k} \to B_{k-1}$   in~$\beta$ is an isometry, then we can delete   $B_k$ from $\beta$ and replace    $ f_k, f_{k+1}$ with their  composition  $f_k f_{k+1}: B_{k+1}\to B_{k-1}$ which is   a contraction or an isometry depending on whether $   f_{k+1}$   is   a contraction or an isometry.   This gives a shorter  full  evolution $\beta'$ for~$X$ which embeds in~$\beta$. It suffices to prove that~$\alpha$ embeds in $\beta'$. Similarly, if the edge  $f_n:B_{n} \to B_{n-1}$   in~$\beta$ is an isometry, then we can delete   $B_{n-1}$ from $\beta$ and replace   $ f_{n-1}, f_{n}$ with their composition. Thus, we can reduce ourselves to the case where all edges  in $\beta$ are contractions. In the same way, we can get rid of \emph{bijective} contractions in~$\beta$ using the obvious  fact that the composition of a bijective $\varepsilon$-contraction with any $\varepsilon'$-contraction is an $(\varepsilon+\varepsilon')$-contraction.
  It remains therefore to treat the case where all edges  in $\beta$ are non-bijective contractions.
  In particular,   $f_n:X=B_n\to B_{n-1}$ is a non-bijective $\varepsilon$-contraction for some $\varepsilon>0$. By Lemma~\ref{contr},  $ \varepsilon=\vert X\vert $. Consequently,  $B_{n-1}$ is isometric to $u(X)$. Proceeding by induction, we   obtain that $B_{n -k}$ is isometric to $u^k(X)$    for all $k\geq 1$. Since $m$   is the smallest integer such that $u^m(X)$ has only one point and since $B_0$ is the only 1-point space in the evolution~$\beta$, we   conclude that $m=n$ and that the evolutions $\alpha$ and $\beta$ are isotypic.   In particular,~$\alpha$ embeds in~$\beta$. This proves   the universality of~$\alpha$.

 To prove (iii),   let $N( X)  $  be the number of non-zero elements in  $d(X\times X) \subset \RR_+$.      By the above, the  evolution \eqref{evol47} is universal and so $h(X)=m$. The values of the ultrametric in $u(X)$  are obtained from those of~$d$ by  finding  the smallest non-zero value of~$d$ and subtracting  it from all   non-zero values of~$d$. Therefore $ N( u(X))= N( X)-1$. Inductively,
   $ N( u^k(X))= N( X)-k$ for   $k=1,..., m$. Since $u^m(X)$ is a 1-point set,
$0= N( u^m(X))= N( X)-m$. Thus, $h(X)=m=N( X) $.

We now prove (iv). That~$\mathcal U$ is small follows from (i).  As we know, all vertices  of~$\mathcal U$ are phylogenetic.  For any contraction or isometry $X \to Y$ between finite ultrametric spaces, one easily sees that $N(X) \geq N(Y)$. Therefore $h(X)=N(X) \geq N(Y)=h(Y)$.
Thus,  $\mathcal U$ is monotonous.

We leave it to the reader to check that all vertices  of~$\mathcal{U}$ are regular.

 \section{The quiver  of  finite metric spaces}\label{sect35dfmetrc}

We  form a phylogenetic quiver  from   finite  metric spaces.  We start by   defining   trim metric spaces and drifts following \cite{Tu1},  \cite{Tu2}.

\subsection{Trim metric spaces}\label{sect30ofofofo}
A  metric space $(X,d)$ is   \emph{trim} if  either $\card (X)= 1$ or    for each $x\in X$, there are distinct   $y,z\in X\setminus \{x\}$ such that $$d(x,y)+d(x,z)= d(y,z).$$ The latter equality may be  expressed by saying that~$x$ lies between~$y$ and $z$.  The class of
finite  trim metric spaces is quite narrow.
In particular, there are no trim   metric spaces  having  two   or three points.
 A
finite subset  of a Euclidean space   with   $  \geq 2$ points and with the induced metric  cannot be trim:  such a subset   must  contain   a pair of points lying at the maximal distance;   these points cannot lie between   other  points of the subset.   For examples of   trim metric spaces, see \cite{Tu1}, \cite{Tu2}.

With any metric space $(X,d)$, we associate a function $\underline d: X \to \RR_+ $ as follows:
 if~$X$ has  only one  point, then $\underline d =0$; if $X$ has two  points $x,y$, then $\underline d(x)=\underline d(y)= d(x,y)/2$; if~$X$ has three or more points, then for all $x \in X$,
$$ {\underline d} (x)= \inf_{y,z \in X\setminus \{x\}, y \neq z }   \frac{ d(x, y)+ d(x, z)-d(y,z)}{2}   \geq 0 . $$
It is easy to check that $\underline d (x) +   \underline d(y) \leq d(x,y)$ for any distinct $x,y\in X$, see \cite{Tu1}. It follows from the definitions  that $\underline d=0$ if and only if    $(X,d)$ is  trim.

\subsection{Drifts}
 We call  a map $f:X\to Y$ between   metric spaces $X=(X,d)$ and $Y=(Y,\rho)$  a  \emph{drift}
 if   $f(X)=Y$  and   for any distinct $x,y \in X$,
\begin{equation}\label{conttrrrrxcx} \rho(f(x),f(y))  =d(x,y)-   \underline d (x) -   \underline d(y) .  \end{equation}
 A  drift is surjective but not necessarily bijective.  If $X$ is trim, then all  drifts $X \to Y$ are isometries.   We state a version of Theorem~\ref{th111} for metric spaces.

\begin{theor}\label{th111+++++++} Let $\mathcal{M}$ be the quiver  whose vertices  are finite    metric spaces and whose edges are
drifts and isometries.
 Then:

 (i) Two finite  metric spaces are isotypic  in~$\mathcal{M}$   if and only if they are isometric;

 (ii) A finite  metric space   is a primitive vertex   of~$\mathcal{M}$   if and only if it is trim;

 (iii)  The quiver  $\mathcal{M}$ is    phylogenetic;
 
 (iv)  All vertices  of~$\mathcal{M}$ are regular.
  \end{theor}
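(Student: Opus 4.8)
The plan is to mirror the proof of Theorem~\ref{th111}, replacing contractions with drifts and the collapsing construction $u(X)$ by its metric analogue. The linchpin is a rigidity property even stronger than in the ultrametric case: the drift equation \eqref{conttrrrrxcx} forces $\rho(f(x),f(y))=d(x,y)-\underline d(x)-\underline d(y)$ for \emph{every} drift $f$ out of a fixed $X=(X,d)$, so the image metric is completely determined by $X$. Concretely, two points collapse ($f(x)=f(y)$) precisely when $d(x,y)=\underline d(x)+\underline d(y)$; granting from \cite{Tu1}, \cite{Tu2} that this relation is an equivalence relation and that the induced quotient $u(X)$ is a genuine metric space, every drift $f:X\to Y$ has $Y$ isometric to $u(X)$. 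First I would record the drift analogue of Lemma~\ref{contr}: with $\vert\vert X\vert\vert=\sum_{x,y\in X}d(x,y)$, a drift whose source is not trim strictly decreases $\vert\vert\cdot\vert\vert$, since in the bijective case $\vert\vert u(X)\vert\vert=\vert\vert X\vert\vert-2(\card(X)-1)\sum_{x}\underline d(x)$ with $\sum_x\underline d(x)>0$ exactly when $X$ is not trim, and the value drops even further when points are collapsed.

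For (i), isometries are edges, so isometric spaces are isotypic; conversely, along any evolution either all edges are isometries (composing to an isometry) or $\vert\vert\cdot\vert\vert$ strictly drops, and since $X\sim Y$ cannot yield $\vert\vert X\vert\vert<\vert\vert Y\vert\vert<\vert\vert X\vert\vert$, isotypic spaces must be isometric. For (ii), a trim space has $\underline d=0$, so every drift out of it is an isometry (as recalled in Section~\ref{sect30ofofofo}); hence all its ancestors are isometric to it and it is primitive. Conversely a non-trim $X$ has the ancestor $u(X)\leq X$ with $\vert\vert u(X)\vert\vert<\vert\vert X\vert\vert$, so $u(X)\not\sim X$ by (i) and $X$ is not primitive.

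Next I would establish finite height and universality. The termination dichotomy is: if the canonical drift $X\to u(X)$ is bijective, substituting $\rho=d-\underline d-\underline d$ into the definition of $\underline d$ shows $\underline\rho\equiv 0$, i.e. $u(X)$ is trim; if it is non-injective then $\card(u(X))<\card(X)$. Iterating, one reaches a trim (hence primitive) space after at most $\card(X)$ steps, producing a canonical full evolution $\alpha=(u^m(X)\leftarrow\cdots\leftarrow u(X)\leftarrow X)$ with $h(X)=m<\infty$. For universality, in an arbitrary full evolution $\beta=(B_0\leftarrow\cdots\leftarrow B_n=X)$ I would absorb each isometry-edge into an adjacent edge (the composite of an isometry with a drift is again a drift, as $\underline d$ is isometry-invariant), reducing to the case of drift edges; then by the rigidity above $B_{n-1}\cong u(X)$, inductively $B_{n-k}\cong u^k(X)$, and since $B_0$ is primitive it is $\cong u^m(X)$. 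Matching $A_k=u^{m-k}(X)$ to the first $B_r$ of height $k$ embeds $\alpha$ in $\beta$, so $\alpha$ is universal and every vertex is phylogenetic.

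Finally I assemble (iii) and prove (iv). Smallness follows from (i), since isometry classes form a set; monotonicity holds because each edge is an isometry (height unchanged) or a non-isometry drift $X\to u(X)$ with $h(u(X))=h(X)-1$; together with universality this gives (iii). For (iv), if $A\leq B$ with $h(A)=h(B)$, then every edge in a chain from $B$ to $A$ preserves height and so must be an isometry (a genuine drift drops height by one), whence the composite $B\to A$ is an isometry and thus an edge of $\mathcal M$; this is exactly regularity. The main obstacle is foundational rather than structural: it lies in the facts borrowed from \cite{Tu1}, \cite{Tu2}, namely that the collapsing relation is an equivalence and that the quotient $u(X)$ satisfies the triangle inequality (well-definedness of the canonical drift), together with the isometry-invariance of $\underline d$; the only delicate bookkeeping in the argument proper is the isometry-absorption step inside the universality proof.
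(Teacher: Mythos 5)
Your proposal is correct and follows essentially the same route as the paper: the $\lVert\cdot\rVert$ invariant for (i), the quotient construction $v(X)=u(X)$ and its iterated evolution terminating at a trim space for (ii) and universality, with the rigidity of drifts (target determined up to isometry by the source) doing the same work as Lemma~\ref{contr} does in the ultrametric case. You in fact supply more detail than the paper, which defers universality to the proof of Theorem~\ref{th111} and leaves smallness, monotonicity, and regularity to the reader; your arguments for those points are sound.
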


\begin{proof} Recall from Section~\ref{Contractions} the isometry invariant
  $ \vert \vert X\vert \vert  $ of   a finite metric space~$ X $. It is clear that for any  edge  $ f:X\to Y$ in~$\mathcal{M}$, either  $\vert \vert X\vert \vert > \vert \vert Y\vert \vert$ or~$f$ is an isometry. If  $X,Y \in \mathcal{M}$ are isotypic, then  applying this argument to the edges in an evolution $Y\leftarrow \cdots \leftarrow X$, we obtain that either  $\vert \vert X\vert \vert > \vert \vert Y\vert \vert$ or all these edges   are isometries. Similarly,     either $\vert \vert Y\vert \vert > \vert \vert X\vert \vert$ or all edges in an evolution $X\leftarrow \cdots \leftarrow Y$ are isometries.   This implies   (i).

  To proceed,   consider a  finite  metric space $ X=(X,d)$ with  at least two points. The formula
 \begin{equation*}
    d^\bullet (x,y)=\left\{
                \begin{array}{ll}
                0 \quad {\text {if}} \,\,  x=y , \\
                  d(x,y)-    \underline d (x) -   \underline d(y) \quad {\text {if}} \,\,   x \neq y   
                \end{array}
              \right.
\end{equation*} defines a map $d^\bullet:X\times X \to \RR$ which
satisfies all requirements   on a metric   except, possibly, one:
there may be distinct   $x,y\in X$ with $d^\bullet (x,y)=0$.  We define an equivalence relation $\sim_{d^\bullet}$ in~$X$   by $x_1\sim_{d^\bullet} x_2$ if $d^\bullet (x_1,x_2)=0$.   Let $Y=X/{\sim_{d^\bullet}}$ be the  quotient set and   let $p:X\to Y$ be the projection. Then there is a unique   map $\rho:Y\times Y\to \RR $ such that $ {d^\bullet}(x,y)=\rho(p(x), p(y))$ for all $x,y\in X$. The map~$\rho$  is a metric in~$Y$, and we   denote the  metric space  $(Y,\rho)$ by   $v(X)$. Clearly, the projection $ X\to v(X)$ is a drift.   Applying this  procedure recursively, we obtain an   evolution  in~$\mathcal M$
 \begin{equation}\label{evol47ff}  v^m(X) \leftarrow   \cdots \leftarrow v^2(X) \leftarrow v(X) \leftarrow X \end{equation}
 where $m\geq 0$ is the smallest integer such that $v^m(X)$ is trim. (The existence of such an~$m$ follows from the fact that if a  drift $X \to Y$ is bijective, then $Y$ is trim, cf.\ \cite{Tu1}, Lemma 2.1). We conclude that
  each vertex    of~$ \mathcal{M} $ has a trim   ancestor.

We can now prove Claim~(ii) of the theorem. By the definition of~$\mathcal M$, the only edges from a trim metric space~$X$ to   vertices  of~$\mathcal M$ are isometries.  Thus, all ancestors of~$X$ are isometric to~$X$, and so~$X   $ is primitive. Conversely, if $X \in  \mathcal{M} $ is   primitive, then~$X$ is  isotypic to all its ancestors. By the above,  $X  $ is isotypic to a trim metric   space. By (i),  $X$ is itself a trim metric space.

 As  in the proof of Theorem~\ref{th111},  the   evolution \eqref{evol47ff}  is     universal, and so all vertices  of~$\mathcal M$ are phylogenetic.
   We leave it to the reader to check   that~$\mathcal M$ is   monotonous and all its vertices  are regular.
\end{proof}

 %\subsection{Ultrametric spaces}\label{sect31} A \emph{dissimilarity} $d$ in a set~$X$ is a map $d:X \times X \to \RR$ such that $d(x,x)=0$ for all $x\in X$ and $d(x,y)=d(y,x)$ for all $x,y\in X$.
% The dissimilarity~$d$ is  \emph{non-negative} if $d(x,y) \geq 0$ for all $x,y\in X$. An \emph{ultrametric}   in a set~$X$ is a non-negative dissimilarity~$d$ in~$X$ such that
% $$d(x,y) \leq \text{max}(d(x,z), d(y,z))$$
%for all $x,y,z\in X$. The latter condition   may be rephrased by saying that for any   $x,y,z\in X$, two of the numbers $d(x,y), d(x,z), d(y,z)$ are equal   and are greater than or equal to the third number.
%
%An   \emph{ultrametric space}   is a pair $(X,d)$ consisting of  a set~$X$ and an ultrametric~$d$ in~$X$.
%An ultrametric space $(X,d)$ is \emph{finite} if~$X$ is a finite set.
%

                     \end{document}